
\documentclass[12pt]{amsart}

\usepackage{amsmath,amsfonts,amsthm,amsopn,cite,mathrsfs} 
\usepackage{epsfig,verbatim}
\usepackage{subfigure}
\usepackage{enumerate}
\setlength{\topmargin}{-8mm}
\setlength{\headheight}{8pt}
\setlength{\textheight}{220mm}  

\setlength{\oddsidemargin}{0pt}
\setlength{\evensidemargin}{0pt}
\setlength{\textwidth}{148 mm}   




\usepackage[pdftex,%
plainpages=false,%
pdfpagelabels,%
hypertexnames=true,
colorlinks=true,%
linkcolor=black,%
citecolor=black,%
bookmarksopen=true,%
bookmarksopenlevel=3,%
bookmarksnumbered=true]{hyperref}

\allowdisplaybreaks



%
\newtheorem{thm}{Theorem}[section]
\newtheorem{prop}[thm]{Proposition}
\newtheorem{lem}[thm]{Lemma}
\newtheorem{cor}[thm]{Corollary}

\theoremstyle{definition}
\newtheorem{defi}[thm]{Definition}
\newtheorem{rmk}[thm]{Remark}

\newtheorem{qtn}{Question}

\renewcommand{\epsilon}{\varepsilon}
\renewcommand{\phi}{\varphi}








\makeatletter
\DeclareRobustCommand\widecheck[1]{{\mathpalette\@widecheck{#1}}}
\def\@widecheck#1#2{%
    \setbox\z@\hbox{\m@th$#1#2$}%
    \setbox\tw@\hbox{\m@th$#1%
       \widehat{%
          \vrule\@width\z@\@height\ht\z@
          \vrule\@height\z@\@width\wd\z@}$}%
    \dp\tw@-\ht\z@
    \@tempdima\ht\z@ \advance\@tempdima2\ht\tw@ \divide\@tempdima\thr@@
    \setbox\tw@\hbox{%
       \raise\@tempdima\hbox{\scalebox{1}[-1]{\lower\@tempdima\box
\tw@}}}%
    {\ooalign{\box\tw@ \cr \box\z@}}}
\makeatother


\newcommand{\vertiii}[1]{{\left\vert\kern-0.35ex\left\vert\kern-0.35ex\left\vert #1 \right\vert\kern-0.35ex\right\vert\kern-0.35ex\right\vert}}



\DeclareMathOperator{\co}{\mathrm{c}_0}

\DeclareMathOperator{\conv}{\mathrm{conv}}


\begin{document}
\begin{abstract}
For a rather general Banach space $X$, we prove that a nonempty closed convex bounded set $C\subset X$ is weakly compact if and only if every nonempty closed convex subset of $C$ has the fixed point property for the class of bi-Lipschitz affine maps. This theorem significantly complements and generalizes to some extent a known result of Benavides, Jap\'on-Pineda and Prus published in 2004. The proof is based on basic sequences techniques and involves clever constructions of fixed-point free affine maps under the lack of weak compactness. In fact, this result can be strengthened when $X$ fulfills Pe\l czy\'nski's property $(u)$.
\end{abstract}

\title[Weak compactness and FPP]{Weak compactness and fixed point property for affine bi-Lipschitz maps}
\author{Cleon S. Barroso}
\address{Departamento de Matem\'atica \\
Universidade Federal do Cear\'a \\
Campus do Pici \\
60455-360 Fortaleza, CE, Brazil}
\email{cleonbar@mat.ufc.br}
\author{Valdir Ferreira}
\address{Centro de Ci\^encias e Tecnologia \\
Universidade Federal do Cariri \\
 Bairro Cidade Universit\'aria\\
    63048-080 Juazeiro do Norte, CE, Brazil}
\email{valdir.ferreira@ufca.edu.br}

\subjclass[2000]{}
\date{}
\keywords{}
\maketitle

\setcounter{tocdepth}{1}
\tableofcontents

\section{Introduction}\label{intro}

Describing and understanding topological phenomena remains one of the most active topics in functional analysis. The problem of describing weak compactness has so far particularly been a topic of great interest. In this paper we are concerned with the problem of whether compactness can be interpreted by the metric fixed point property (FPP). Recall that a topological space $C$ is said to have the FPP for a class $\mathcal{M}$ of maps if every $f\in \mathcal{M}$ with $f(C)\subset C$ has a fixed point. This problem has been studied from a number of topological viewpoints by several authors, see e.g. \cite{Kl,Flo,LS,DM,BKR,BPP} and references therein. In the purely metric context, it is often subjected to structural considerations. This can be seen in several works where weak compactness constitutes the FPP for nonexpansive ($1$-Lipschitz) affine  mappings. For example, Lennard and Nezir \cite{LN} proved that if a Banach space $X$ contains a basic sequence $(x_n)$ asymptotically isometric to the $\co$-summing basis, then its closed convex hull $\overline{\conv}\big( \{ x_n\}\big)$ fails the FPP for affine nonexpansive mappings. Typically, in theses cases, the set $\overline{\conv}\big( \{ x_n\}\big)$ is not weakly compact.

An interesting relaxation of the FPP is the generic-FPP ($\mathcal{G}$-FPP), a notion first proposed in \cite{BPP}.  For a convex subset $M$ of a topological vector space $X$, denote by $\mathcal{B}(M)$ the family of all nonempty bounded, closed convex subsets of $M$.

\begin{defi}[\cite{BPP}]\label{def:1sec1} A nonempty set $C\in\mathcal{B}(X)$ is said to have the $\mathcal{G}$-FPP for a class $\mathcal{M}$ of mappings if whenever $K\in \mathcal{B}(C)$ then every mapping $f\in \mathcal{M}$ with $f(K)\subset K$ has a fixed point.
\end{defi}

There is quite a lot known on $\mathcal{G}$-FPP. For instance, Dowling, Lennard and Turett \cite{DLT1,DLT2} proved that when $X$ is either $\co$, $L_1(0,1)$ or $\ell_1$ sets $C\in \mathcal{B}(X)$ are weakly compact if and only if they have the $\mathcal{G}$-FPP for affine nonexpansive maps. In  2004 Benavides, Jap\'on-Pineda and Prus proved, among other important results, the following facts.

\begin{thm}[(Benavides, Jap\'on Pineda and Prus \cite{BPP})]\label{thm:BPP} Let $X$ be a Banach space and $C\in \mathcal{B}(X)$. Then
\begin{enumerate}
\renewcommand{\labelenumi}{(\roman{enumi})}
\item $C$ is weakly compact if and only if $C$ has the $\mathcal{G}$-FPP for continuous affine maps. 
\item If $X$ is either $\co$ (equipped with the supremum norm $\| \cdot\|_\infty$) or  $J_p$ (the James space), then $C$ is weakly compact if and only if $C$ has the $\mathcal{G}$-FPP for uniformly Lipschitzian affine maps.
\item If $X$ is an $L$-embedded Banach space, then $C$ is weakly compact if and only if it has the $\mathcal{G}$-FPP for nonexpansive affine mappings.
\end{enumerate}
\end{thm}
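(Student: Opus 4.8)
The plan is to treat the two implications of each equivalence separately, observing that the forward directions (weak compactness $\Rightarrow$ $\mathcal{G}$-FPP) can be handled uniformly across (i)--(iii), while the converses carry the real content. I would first record the elementary fact that a norm-continuous affine self-map $f$ of a bounded convex set is the restriction of a map $x\mapsto Tx+b$ with $T$ a bounded linear operator; in particular $f$ is weak-to-weak continuous. Since uniformly Lipschitzian and nonexpansive affine maps are continuous affine maps, a single Krylov--Bogolyubov argument then settles the ``only if'' part of all three statements. Let $C$ be weakly compact and $K\in\mathcal{B}(C)$; then $K$ is weakly compact, being convex and norm-closed inside a weakly compact set. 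Given such an $f$ with $f(K)\subset K$, fix $x_0\in K$ and form the Ces\`aro means $x_n=\frac1n\sum_{j=0}^{n-1}f^j(x_0)\in K$. Affinity gives $f(x_n)-x_n=\frac1n\bigl(f^n(x_0)-x_0\bigr)$, whose norm is at most $\operatorname{diam}(K)/n\to0$. By weak compactness some subnet $x_{n_\alpha}\rightharpoonup\bar x\in K$; weak continuity of $f$ yields $f(x_{n_\alpha})\rightharpoonup f(\bar x)$, while $f(x_{n_\alpha})-x_{n_\alpha}\to0$ in norm, so comparing weak limits gives $f(\bar x)=\bar x$. This proves the forward direction of (i), and hence of (ii) and (iii) as well.

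The converses are where the hypotheses on $X$ enter. Suppose $C\in\mathcal{B}(X)$ is not weakly compact. By the Eberlein--\v{S}mulian theorem there is a sequence $(y_n)\subset C$ with no weakly convergent subsequence. Using basic-sequence techniques---Rosenthal's $\ell_1$-theorem together with the Bessaga--Pe\l czy\'nski selection principle---I would pass to a subsequence $(x_n)$ that is a basic sequence and is either equivalent to the $\ell_1$-basis or weakly Cauchy but not weakly convergent; in the latter case its weak$^*$-limit $x^{**}\in X^{**}$ lies outside $X$. Set $K=\overline{\conv}\{x_n:n\ge1\}\in\mathcal{B}(C)$. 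Since $(x_n)$ is basic, the coordinate functionals are continuous on $\overline{\operatorname{span}}\{x_n\}$, so the shift $S\colon x_n\mapsto x_{n+1}$ extends to a bounded linear, hence continuous affine, operator mapping $K$ into $\overline{\conv}\{x_n:n\ge2\}\subset K$. A point $x=\sum t_nx_n$ fixed by $S$ must have shift-invariant coefficients ($t_1=0$ and $t_n=t_{n-1}$ for all $n$), which forces either $x=0\notin K$ in the $\ell_1$-case, where the lower $\ell_1$-estimate separates $K$ from the origin, or the missing weak$^*$-limit $x^{**}$ to be realized in $X$ in the weakly Cauchy case---both impossible. This produces a continuous affine fixed-point-free self-map of $K$ and establishes the converse of (i).

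To obtain (ii) and (iii) I would refine this construction so that the shift lands in the smaller prescribed class, and this is the step I expect to be the main obstacle, since controlling the Lipschitz behaviour of the iterates $S^k$ requires the specific geometry of the ambient space. For $X=\co$ or $X=J_p$ one arranges $(x_n)$ to be essentially the summing basis, whose shift has uniformly bounded powers, yielding a uniformly Lipschitzian affine map; for $L$-embedded $X$ one exploits the $\ell_1$-decomposition $X^{**}=X\oplus_1 X_s$ of the bidual to steer the construction toward the singular part $X_s$ and thereby render the displacement map nonexpansive. In each case one must simultaneously guarantee that $K\in\mathcal{B}(C)$, that the map is a genuine self-map of $K$ within the class, and that the unique candidate fixed point is excluded; verifying all three under the quantitative constraints imposed by the uniform Lipschitz or nonexpansiveness bounds is the technically delicate heart of the argument.
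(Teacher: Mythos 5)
First, a point of orientation: the paper does not prove Theorem \ref{thm:BPP} at all --- it is quoted from \cite{BPP}, and the only comment the authors make is that the forward implications follow from the Schauder--Tychonoff theorem because norm-continuous affine maps are weakly continuous. So your proposal can only be compared with that remark and with the constructions of \cite{BPP} recalled in Section \ref{sec:3}. Your Ces\`aro-mean argument for the forward direction is a correct alternative route, but the ``elementary fact'' you use to get weak continuity is false: a norm-continuous affine self-map of a bounded closed convex set need \emph{not} be the restriction of $x\mapsto Tx+b$ with $T$ bounded. For instance, $f(x)_n=2^n x_{2n}$ maps the Hilbert cube $\{x\in\ell_2:|x_n|\le 2^{-n}\}$ affinely and norm-continuously into itself, yet its linear part satisfies $\|f(e_{2n})\|=2^n\to\infty$. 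Weak continuity of norm-continuous affine maps is nevertheless true, but the correct argument is by convexity: for each $x^*\in X^*$ the real affine function $x^*\circ f$ and its negative are convex, norm-continuous, and have convex norm-closed (hence weakly closed) epigraphs, so $x^*\circ f$ is weakly continuous.

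The genuine gap is in your converse of (i). You claim the shift $S\colon x_n\mapsto x_{n+1}$ ``extends to a bounded linear operator'' because the coordinate functionals of a basic sequence are continuous. That is a non sequitur, and the conclusion is false in general: boundedness of $S$ on $[x_n]$ amounts to $(x_n)$ dominating $(x_{n+1})$, and the paper's introduction is at pains to point out that this can fail --- Gowers' space $G$ of \cite{G} has an unconditional basis whose right-shift operator is not norm-bounded, and hereditarily indecomposable spaces admit no shift-equivalent basic sequences whatsoever. This obstruction is precisely why the paper's own main theorem abandons the shift and instead uses $f\big(\sum_n t_n x_n\big)=(1-\alpha_1)t_1x_1+\sum_{n\ge 2}\big((1-\alpha_n)t_n+\alpha_{n-1}t_{n-1}\big)x_n$, a small weighted-shift perturbation of the identity whose bi-Lipschitz character is forced by Lemma \ref{prop:1HJ} and the principle of small perturbations. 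Your argument does go through in the $\ell_1$ case (there $S$ is Lipschitz on $K$ because the coefficient map is an isomorphism onto the positive face of $\ell_1$), but in the weakly Cauchy case nothing you say yields even mere norm-continuity of $S$ on $K=\overline{\conv}\{x_n\}$: nearby points of $K$ can have coefficient sequences far apart in $\ell_1$, and $\big\|\sum_n(t_n-s_n)x_{n+1}\big\|$ cannot be controlled by $\big\|\sum_n(t_n-s_n)x_n\big\|$ without exactly the kind of domination hypothesis that may fail. (In \cite{BPP} this case is treated via Rosenthal's $\mathrm{c}_0$-theorem: either a convex block basis equivalent to the summing basis of $\co$ is extracted, where the shift genuinely is uniformly Lipschitz, or a strongly summing subsequence, for which the bilateral-shift map $f_1$ of Section \ref{sec:3} is designed.) Two further points: your fixed-point-freeness argument tacitly uses the representation $K=\{\sum_n t_n x_n: t_n\ge 0,\ \sum_n t_n=1\}$ and $0\notin K$, which require the wide-$(s)$ lower estimate (cf.\ Proposition \ref{prop:selection} and the Claim in the paper's proof of its main theorem) that you never establish; and for (ii) and (iii) you only describe what would have to be done, so those parts remain unproved.
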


Recall that a map $f\colon C\to X$ is said to be uniformly Lipschitz if
\[
\sup_{x\neq y\in C,\, p\in \mathbb{N}} \frac{ \| f^p(x)  - f^p(y)\|}{\| x - y\|}<\infty,
\]
where $f^p$ denotes the $p^{\textrm{th}}$ iteration of the mapping $f$. If, in addition, its inverse $f^{-1}$ is uniformly Lipschitz then $f$ is said to be uniformly bi-Lipschitz. Therefore, if $f$ is nonexpansive then it obviously is uniformly Lipschitz. It worths stressing that norm-continuous affine maps are in fact weakly continuous. Thus, one direction of the statements in Theorem \ref{thm:BPP} easily follows from Schauder-Tychonoff's fixed point theorem as pointed out in \cite{BPP}. 

At first sight one may be tempted to characterize weak-compactness in terms of the $\mathcal{G}$-FPP for nonexpansive maps. However this is not generally true. Indeed, in 2008 P.-K. Lin \cite{Lin} equipped $\ell_1$ with the equivalent norm
\[
\vertiii{ x }_{\mathscr{L}}=\sup_{k\in\mathbb{N}}\frac{8^k}{1+8^k}\sum_{n=k}^\infty | x(n)|\quad\textrm{for}\quad x=(x(n))_{n=1}^\infty \in \ell_1,
\]
and proved that every $C\in \mathcal{B}\big((\ell_1, \vertiii{\cdot}_{\mathscr{L}})\big)$ has the FPP for nonexpansive maps. Hence the unit ball $B_{(\ell_1, \vertiii{\cdot}_{\mathscr{L}})}$ has the $\mathcal{G}$-$FPP$ for affine nonexpansive maps, but fails to be weakly compact. 

Another interesting example is highlighted by the following result from the recent literature, due to T. Gallagher, C. Lennard and R. Popescu:

\begin{thm}[\cite{GLP}] Let $c$ be the Banach space of convergent scalar sequences. Then there exists a non-weakly compact set $C\in \mathcal{B}\big( (c, \|\cdot\|_\infty)\big)$ with the FPP for nonexpansive mappings. 
\end{thm}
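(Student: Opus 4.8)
\emph{Proof idea.} The plan is to exhibit a concrete set $C\in\mathcal{B}\big((c,\|\cdot\|_\infty)\big)$ that fails weak compactness but is nonetheless \emph{weak$^*$-compact} once $c$ is regarded as a subspace of $\ell_\infty=(\ell_1)^*$, and then to derive the FPP from this residual compactness through a minimal-invariant-set argument carried out in the weak$^*$ topology. The set I would take is
\begin{equation*}
C=\big\{x\in c : 1\ge x(1)\ge x(2)\ge\cdots\ge 0\big\},
\end{equation*}
the collection of non-increasing sequences in $[0,1]$, which is evidently bounded, convex and $\|\cdot\|_\infty$-closed.

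To check that $C$ is not weakly compact, recall the identification $c^*\cong\ell_1$, under which every functional acts by $x\mapsto\mu\lim_n x(n)+\sum_n\mu_n x(n)$ for some $(\mu,(\mu_n))\in\R\times\ell_1$. The summing vectors $s_n=\sum_{k=1}^n e_k$ belong to $C$, yet any weak subsequential limit $z$ of $(s_n)$ would have to satisfy $z(k)=1$ for every $k$ (testing against the coordinate functionals) while simultaneously $\lim_k z(k)=0$ (testing against the limit functional), which is absurd. Hence $(s_n)$ admits no weakly convergent subsequence and $C$ is not weakly compact.

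The decisive structural observation is that $C$ \emph{is} weak$^*$-compact inside $\ell_\infty$: a bounded non-increasing real sequence automatically converges, so $C$ coincides with the set of all non-increasing $[0,1]$-valued sequences, a set closed under coordinatewise -- equivalently weak$^*$ -- limits and bounded, hence weak$^*$-compact. This is the compactness that weak compactness fails to furnish. Given a nonexpansive $T\colon C\to C$, I would consider the family of nonempty weak$^*$-compact convex $T$-invariant subsets of $C$; chains in it have nonempty intersection (finite intersection property together with weak$^*$-compactness), so Zorn's lemma produces a minimal such set $K$. I would then build an approximate fixed point sequence (a.f.p.s.) $(x_n)$ in $K$ as usual, by taking the fixed points of the contractions $T_t=(1-t)p+tT$ and letting $t\uparrow 1$, and adapt the Goebel--Karlovitz lemma to the weak$^*$ topology to conclude that $\lim_n\|x_n-x\|_\infty=\operatorname{diam}(K)$ for every $x\in K$. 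Everything is thereby reduced to showing $\operatorname{diam}(K)=0$, i.e. that $K$ is a single fixed point.

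I expect this final reduction to be the genuine obstacle, and it is precisely where the idiosyncrasies of $c$ must enter. The set $C$ has no normal structure; indeed it contains the norm-closed convex hull $\overline{\conv}\{s_n\}$, a copy of the $\co$-summing basis whose convex hull \emph{fails} the FPP by the Lennard--Nezir obstruction recalled in the introduction. Consequently no classical sufficient condition applies, and the fixed point cannot arise from any uniform mechanism. The heuristic is that a fixed-point-free nonexpansive self-map would, on a minimal invariant set, have to act like a summing-type shift driving mass toward the constant sequence $\mathbf{1}=(1,1,1,\dots)$; but $\mathbf{1}$ is simultaneously the weak$^*$-limit and the order-maximum of $C$, and the interaction of the limit functional $L(x)=\lim_n x(n)$ with the monotone order structure constrains the asymptotic behaviour of $(x_n)$ tightly enough to force the asymptotic radius down to zero. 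Turning this rigidity into a rigorous quantitative statement -- showing that the sup norm, the limit functional and monotonicity jointly forbid a diametral a.f.p.s. inside $C$ -- is the crux of the whole argument.
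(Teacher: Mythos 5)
This theorem is not proved in the paper at all---it is quoted from \cite{GLP}---so your attempt can only be measured against the argument in that reference. Your opening moves do track it: the Gallagher--Lennard--Popescu set is the closed convex hull of the summing basis $\{s_n\}$ together with its weak$^*$ limit $\mathbf{1}=(1,1,1,\dots)$, namely $E=\{x\in c\,:\, 1=x(1)\ge x(2)\ge\cdots\ge 0\}$ (your $C$ is the slightly larger set in which $x(1)\le 1$ is allowed), and the residual compactness they exploit is exactly the weak$^*$ compactness of this set inside $\ell_\infty=(\ell_1)^*$. Your verifications that $C$ is closed, bounded, convex, weak$^*$ compact, and not weakly compact are all correct.

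The genuine gap is the one you flag yourself, and it is fatal to the proposal as a proof. The reduction to a minimal weak$^*$-compact convex $T$-invariant set $K$ and a diametral approximate fixed point sequence is standard machinery that applies verbatim to \emph{every} weak$^*$-compact convex subset of \emph{every} dual space, so it cannot by itself yield the conclusion: Alspach's isometric, fixed-point-free self-map of a weakly compact convex subset of $L_1(0,1)$ shows that $\operatorname{diam}(K)=0$ does not follow from compactness plus minimality even under the stronger hypothesis of weak compactness. Consequently, the entire mathematical content of the statement is the step you leave as a heuristic---a quantitative argument, specific to the geometry of $(c,\|\cdot\|_\infty)$, showing that the sup norm, the limit functional and the monotone structure of $C$ are incompatible with a diametral approximate fixed point sequence. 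In \cite{GLP} precisely this step occupies the bulk of the paper and is carried out through delicate coordinatewise estimates on approximate fixed point sequences; nothing in your sketch substitutes for it, so what you have is a correct framework and a correct candidate set, not a proof. Two smaller points: (i) the weak$^*$ analogue of the Goebel--Karlovitz lemma you invoke needs to be stated and justified (it does hold for sets compact in a locally convex topology for which the norm is lower semicontinuous, but it is not the classical statement); (ii) since your $C$ differs from the set treated in \cite{GLP}, even granting that reference you would still owe an argument for your variant, as the fixed point property does not pass automatically between a convex set and larger or smaller convex subsets of it.
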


It is natural to ask, therefore, whether weak compactness describes the $\mathcal{G}$-FPP for affine uniformly Lipschitz maps in arbitrary Banach spaces. To make this more precise, we formulate the following:

\begin{qtn}\label{qtn:1} Let $X$ be a Banach space and $C\in \mathcal{B}(X)$. Assume that $C$ is not weakly compact. Does there exist a set $K\in \mathcal{B}(C)$ and an affine uniformly Lipschitz map $f\colon K\to K$ that is fixed-point free? 
\end{qtn}

In a naive way, one could try to get a wide-$(s)$ sequence which uniformly dominates all of its subsequences; that is, a basic sequence $(x_n)$ such that for some positive constants $d$ and $D$ and every increasing sequence of integers $(n_i)\subset \mathbb{N}$, the following inequalities hold for all $n\in \mathbb{N}$ and all choice of scalars $(a_i)_{i=1}^n$
\begin{equation}\label{eqn:1int}
d\left| \sum_{i=1}^n a_i\right| \leq \left\| \sum_{i=1}^n a_i x_{n_i}\right\| \leq D \left\| \sum_{i=1}^n a_i x_i\right\|.
\end{equation}

This certainly obstructs the class of affine uniformly Lipschitz maps from having the $\mathcal{G}$-FPP. However this property has a strong unconditionality character. Indeed, subsymmetric or quasi-subsymmetric basis (in sense of \cite[Corollary 2.7]{ABDS}) are examples of basic sequences of this kind. So, it might not be so easy to get them since unconditional basic sequences may not exist at all \cite{GM}. 

Another possibility would be try to get wide-$(s)$ sequences $(x_n)$ that dominate their shift subsequences $(x_{n+p})$, but uniformly on $p$. Typically, this happens when special structures are available as, for example, those equivalent  to $\co$ or $\ell_1$ as well (cf. also \cite[Theorem 1]{DLT1}, \cite[Theorem 4.2]{BPP}, \cite{LN} and \cite[Proposition 2.5.14]{MN}). Such a possibility would however imply  that the shift operator induced by $(x_n)$ would be continuous. But this might be notoriously difficult, or even generally impossible. One of the reasons is that the class of Hereditarily Indecomposable spaces (spaces that have no decomposable subspaces, cf. \cite{GM}) do not admit shift-equivalent basic sequences, that is, sequences $(x_n)$ which are equivalent to its right-shift $(x_{n+1})$. Moreover, the Banach space $G$ was constructed by Gowers in \cite{G} has an unconditional basis for which the right shift operator is not norm-bounded. These facts seems to indicate that there is no hope to solving Question \ref{qtn:1} by considering shift like maps.

The first main result of this paper solves Question \ref{qtn:1} for the class of affine bi-Lipschitz maps. Precisely, it will be proved that if $X$ is a general Banach space and $C\in \mathcal{B}(X)$ is not weakly compact then it fails the $\mathcal{G}$-FPP for the class of bi-Lipschitz affine maps. Let us stress that the clever idea behind the proof is to build inside $C$ a basic sequence $(x_n)$ which dominates the summing basis of $\co$  and yet is equivalent to some of its non-trivial convex basis (see precise Definition \ref{def:3sec2}). This will give rise to a fixed-point free bi-Lipschitz affine map $f$ leaving invariant a set $K\in \mathcal{B}(C)$. As we shall see, the set $K$ is precisely the closed convex hull of $(x_n)$. As regards the map $f$, it will be essentially taken as the sum of a diagonal operator and a weighted shift map with properly chosen coefficients. This yields a new construction in metric fixed point theory and can make more transparent the challenges behind Question \ref{qtn:1}. To prove that $f$ is bi-Lipschitz we rely on a key lemma on affinely equivalent basic sequences. We also point out that our approach differs from that in \cite{BPP} where, because of the special nature of the spaces considered there, bilateral and right-shift maps were successfully used. The second main result is that one can affirmatively solve Question \ref{qtn:1} in spaces with the Pe\l czy\'nski's property $(u)$. The proof uses a local version of a classical result of James proved for spaces with unconditional basis. 

The remainder of the paper is organized as follows. In Section 2 we will set up the notation and terminology adopted in this work. In Section 3 we slightly recover a few ideas behind clever constructions of fixed-point free maps under the lack of weak compactness. In Section 4 contains a fundamental lemma concerning a notion of affinely equivalent sequences introduced by Pe\l czy\'nski and Singer. Section 6 contains a local version of a result of James which describes the internal structure of bounded, closed convex sets in spaces with property $(u)$. In Section 6 we formally state and prove the main result of this paper. Finally, in Section 7 we state and prove the second main result of this paper.


\section{Notation and basic terminology}\label{sec2:Notation}
Throughout this paper $X$ will denote a Banach space. The notation used here is standard. In particular, a sequence $(x_n)$ in $X$ is called a basic sequence if it is a Schauder basis for its closed linear span $[x_n]$. In this case $\mathcal{K}$ will stand for the basic constant of $(x_n)$. Further, we will also denote by $P_n$ and $R_n$ the natural basis projections given by 
\[
P_n x= \sum_{i=1}^n x^*_i(x) x_i\quad \textrm{ and } \quad R_nx= x - P_nx,\quad x\in [x_n]
\] 
where $\{ x^*_i\}_{i=1}^\infty$ are the biorthogonal functionals associated with $(x_n)$. Recall that $\mathcal{K}:=\sup_n\| P_n\|$. By $\mathrm{c}_{00}$ we denote the vector space of sequences of real numbers which eventually vanish. Let us now recall a few well-known notions from Banach space theory.

\begin{defi}\label{def:2sec2} Let $(x_n)\subset X$ and $(y_n)\subset Y$ be two sequences, where $X, Y$ are Banach spaces. The sequence $(x_n)$ is said to dominate the sequence $(y_n)$ if there exists a constant $L>0$ so that 
\[
\Big\| \sum_{n=1}^\infty a_n y_n \Big\| \leq L \Big\| \sum_{n=1}^\infty a_n x_n \Big\|, 
\]
for all sequence $(a_n)\in \mathrm{c}_{00}$.
\end{defi}

Observe that when $(x_n)$ and $(y_n)$ are both basic sequences, to say that $(x_n)$  dominates $(y_n)$ is the same as to say that the map $x_n\mapsto y_n$ extends to a linear bounded map between $[x_n]$ and $[y_n]$. The sequences $(x_n)$ and $(y_n)$ are said to be equivalent (also called $L$-equivalent, with $L\geq 1$) and one writes $(x_n)\sim_L (y_n)$, if for any $(a_i)\in \mathrm{c}_{00}$ one has that
\[
\frac{1}{L} \Big\| \sum_{i=1}^\infty a_i x_i \Big\| \leq \Big\| \sum_{i=1}^\infty a_i y_i \Big\| \leq L \Big\| \sum_{i=1}^\infty a_i x_i \Big\|. 
\]
The {\it summing basis} of $\co$ is the sequence $(\chi_{\{ 1,2, \dots, n\}})$ in $\co$ where for $n\in \mathbb{N}$, $\chi_{\{ 1,2, \dots, n\}}$ is defined by
\[
\chi_{\{1,2, \dots, n\}}=e_1 + e_2 + \dots + e_n,
\]
and $(e_n)$ being the canonical basis of $\co$. It is well known that the sequence $(\chi_{\{1, 2,\dots, n\}})_n$ defines a Schauder basis for $( \co, \| \cdot\|_\infty)$. A sequence $(x_n)$ in a Banach space $X$ is then said to be equivalent to the summing basis of $\co$ if 
\[
(x_n)\sim_L (\chi_{\{1, 2,\dots, n\}})\,\, \textrm{ for some } L\geq 1.
\]

\begin{defi} A sequence $(x_n)$ in $X$ is called  {\it semi-normalized} if 
\[
0<\inf_n\| x_n\|\leq \sup_n\| x_n\|<\infty. 
\] 
\end{defi}

The following additional notions were introduced by H. Rosenthal.

\begin{defi}[(Rosenhtal \cite{Ro})] A seminormalized sequence $(x_n)$ in $X$ is called:
\begin{enumerate}
\item A non-trivial weak Cauchy sequence if it is weak Cauchy and non-weakly convergent. 
\item A wide-$(s)$ sequence if $(x_n)$ is basic and dominates the summing basis of $\co$.
\item An $(s)$-sequence if $(x_n)$ is weak-Cauchy and a wide-$(s)$ sequence.
\item {\it Strongly summing} if it is a weak-Cauchy basic sequence so that whenever $(a_i)$ is a sequence of scalars with $\sup_n\big\| \sum_{i=1}^n a_i x_i\big\| <\infty$, $\sum_i a_i$ converges. 
\end{enumerate}
\end{defi}

Rosenthal's $\mathrm{c}_0$-theorem \cite{Ro} ensures that every non-trivial weak-Cauchy sequence in $X$ has either a strongly summing subsequence or a convex block basis which is equivalent to the summing basis of $\mathrm{c}_0$. Finally, recall that a sequence of non-zero elements  $(z_n)$ of $X$ is called a convex block basis of a given sequence $(x_n)\subset X$ if there exist integers $n_1<n_2<\dots$ and scalars $c_1, c_2,\dots$ so that
\begin{enumerate}
\item[(i)] $c_i\geq 0$ for all $i$ and $\sum_{i=n_j+1}^{n_{j+1}} c_i=1$ for all $j$.\vskip .1cm
\item[(ii)] $z_j=\sum_{i=n_j + 1}^{n_{j+1}}c_i x_i$ for all $j$.
\end{enumerate}


\section{Convex basic sequences}\label{sec:3}
The construction of affine fixed-point free maps usually relies on maps  which are defined by taking suitable convex combinations of some basic sequence $(x_n)$ in $X$. For example, in  \cite{BPP} the following maps were considered in the proof of Theorem \ref{thm:BPP}:
\[
f_0\Big( \sum_{n=1}^\infty t_n x_n\Big) = \sum_{n=1}^\infty t_n x_{n+1},
\]
and
\[
f_1\Big( \sum_{n=1}^\infty t_n x_n \Big) = t_2 x_1 + \sum_{n=1}^\infty t_{ 2n-1} x_{2n+1}  + \sum_{n=2}^\infty t_{2n} x_{2n-2}.
\]
It is interesting to mention that, according to the terminology of \cite{BPP}, $f_0$ and $f_1$ are respectively a unilateral shift and a bilateral shift map. 

As another instance, the authors in \cite{DLT2} have described weak compactness in $\co$ in terms of the $\mathcal{G}$-FPP for nonexpansive maps by considering the map:
\[
f_2\Big( \sum_{n=1}^\infty t_n x_n\Big) = \sum_{n\in \mathbb{N}} \sum_{j\in \mathbb{N}} \frac{1}{2^j} t_n x_{j+n}.
\]

Therefore if $X$ is structurally well-behaved these convex combinations can be dominated by $(x_n)$. This naturally reflects on the metric fixed point property. Thus, it seems reasonable to understand the structure of such combinations. As a step towards this direction, we consider the following slightly generalized notion of convex block sequences. 

\begin{defi}\label{def:3sec2} Let $(x_n)$ be a sequence in $X$. A sequence $(z_n)$ is called a convex basis of $(x_n)$ if $(z_n)$ is basic and for each $n\in \mathbb{N}$ there exist scalars $\{ \lambda^{(n)}_k\}_{k=1}^\infty$ in $[0,1]$ so that $\sum_{i=1}^\infty \lambda^{(n)}_i=1$ and $z_n=\sum_{i=1}^\infty \lambda^{(n)}_ix_i$.
\end{defi}

\begin{rmk}\label{rem:star} It is clear that every subsequence of a basic sequence $(x_n)$ is itself a convex basis of $(x_n)$. These subsequences will be referred here as trivial convex basis. 
\end{rmk}

As we have mentioned in the introduction, one may try to describe weak-compactness in terms of the $\mathcal{G}$-$FPP$ for the class of uniformly Lipschitz maps by trying to get wide-$(s)$ sequences satisfying (\ref{eqn:1int}). The proposition below shows however that spaces with the scalar-plus-compact property are not optimal environments for doing that.

\begin{prop} Let $(x_n)$ be a wide-$(s)$ sequence in $X$. Assume that $(z_n)$ is a convex basis of $(x_n)$ whose subsequences are dominated by $(x_n)$. Then $\mathcal{L}( [x_n])$ is non-separable.
\end{prop}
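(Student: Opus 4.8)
The plan is to exhibit an uncountable, uniformly norm-separated family of bounded operators on $[x_n]$, indexed by the infinite subsets of $\mathbb{N}$; their mere existence forces $\mathcal{L}([x_n])$ to be non-separable. For an infinite set $M=\{n_1<n_2<\cdots\}\subset\mathbb{N}$ I would consider the operator $T_M$ determined on the basis by $T_M x_k = z_{n_k}$. Since $(z_{n_k})_k$ is a subsequence of $(z_n)$, the standing hypothesis guarantees that it is dominated by $(x_n)$, i.e. $\|\sum_k a_k z_{n_k}\| \le L_M \|\sum_k a_k x_k\|$ for all finitely supported scalars; by the remark following Definition \ref{def:2sec2} this is precisely the statement that $T_M$ extends to a bounded operator on $[x_n]$. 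Thus every $T_M$ lies in $\mathcal{L}([x_n])$.

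The crux is to bound the family $\{T_M\}$ apart in operator norm, and for this the decisive point is that $(z_n)$ must be \emph{seminormalized}; this is where the wide-$(s)$ hypothesis enters. Because $(x_n)$ dominates the summing basis of $\co$, the summing functional $\sum_i a_i x_i \mapsto \sum_i a_i$ is bounded on the linear span of $(x_n)$, with norm at most the domination constant $L$; let $\phi$ denote its continuous extension, so that $\phi(x_i)=1$ for every $i$ and $\|\phi\|\le L$. Evaluating $\phi$ on $z_n=\sum_i \lambda^{(n)}_i x_i$ and using $\sum_i\lambda^{(n)}_i=1$ together with the continuity of $\phi$ gives $\phi(z_n)=1$ for every $n$, whence $\|z_n\|\ge 1/L$. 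Since also $\|z_n\|\le \sup_i\|x_i\|<\infty$, being a convex combination of a seminormalized sequence, $(z_n)$ is seminormalized. As $(z_n)$ is in addition basic, a routine coefficient-functional estimate then yields a constant $c_0>0$ with $\|z_i-z_j\|\ge c_0$ whenever $i\ne j$.

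With these two ingredients the separation is immediate: if $M\ne M'$ are distinct infinite subsets, their increasing enumerations differ at some index $k$, so $(T_M-T_{M'})x_k = z_{n_k}-z_{n'_k}$ has norm at least $c_0$, and therefore $\|T_M-T_{M'}\|\ge c_0/\sup_i\|x_i\| =: c_1>0$. Since there are continuum-many infinite subsets of $\mathbb{N}$, the set $\{T_M\}$ is an uncountable $c_1$-separated subset of $\mathcal{L}([x_n])$, which cannot then be separable. I expect the only genuinely delicate step to be the seminormalization of $(z_n)$: the boundedness of each $T_M$ and the final counting are formal, but the lower bound $\|z_n\|\ge 1/L$ is exactly the place where the two hypotheses—domination of the summing basis and the convex (sum-one) structure of the $z_n$—must be combined, and it is what prevents the vectors $z_n$ from degenerating in norm.
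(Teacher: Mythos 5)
Your proposal is correct and follows essentially the same route as the paper: the same family of operators $x_k \mapsto z_{n_k}$ indexed by infinite subsets of $\mathbb{N}$ (the paper uses increasing sequences, which is the same thing), boundedness from the domination hypothesis, and norm-separation via $\inf_n\|z_n\|>0$ combined with the basis constant of $(z_n)$. The only difference is cosmetic: you spell out the summing-functional argument for $\|z_n\|\ge 1/L$, which the paper compresses into the remark that wide-$(s)$ ``implies $\inf_n\|z_n\|>0$.''
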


\begin{proof} We proceed as in \cite{ABDS} by obtaining uncountable many pairwise separated bounded linear operators on the space $[x_n]$. For each increasing sequence $(\kappa_n)$ in $\mathbb{N}$ define $T_{(\kappa_n)}\colon [x_n]\to [x_n]$ by $T_{(\kappa_n)}(x)= \sum_{n=1}^\infty x^*_n(x) z_{\kappa_n}$. By assumption each map $T_{(\kappa_n)}\in \mathcal{L}([x_n])$. Moreover, if  $(\kappa_n)$ and $(\ell_n)$ are two different increasing sequences in $\mathbb{N}$ then for some $j\in \mathbb{N}$ so that $\kappa_j \neq \ell_j$ we have
\begin{eqnarray*}
\| T_{(\kappa_n)} - T_{(\ell_n)}\| &\geq& \left\| \sum_{n=1}^\infty \left( x^*_n\bigg(\frac{x_j}{\| x_j\|}\bigg)z_{\kappa_n} - x^*_n\bigg( \frac{ x_j}{\| x_j\| } \bigg) z_{\ell_n}\right)\right\|\\
&=& \frac{1}{\| x_j\|} \| z_{\kappa_j} - z_{\ell_j}\|\geq \frac{\inf_n\| z_n\|}{\mathcal{K}\sup_n\| x_n\|}>0,
\end{eqnarray*}
where $\mathcal{K}$ denotes the basic constant of $(z_n)$. The penultimate inequality above follows easily from the fact that $(x_n)$ is $\mathcal{K}$--basic, while the last one is a direct consequence of $(x_n)$ being wide-$(s)$  which in turn implies $\inf_n\| z_n\|>0$. 
\end{proof}

Our first main result relies on the selection of non-trivial convex bases that must be structurally well behaved. This involves two important steps. The first one concerns the selection of wide-$(s)$ subsequences. To this end we will rely on the following result of Rosenthal (\cite[Proposition2]{Ro1}), the proof of which will be included here for reader's convenience. The second one concerns clever constructions of convex bases of wide-$(s)$ sequences, this precisely being the content of the next sections.

\begin{prop}\label{prop:selection} Let $X$ be a Banach space and $(y_n)$ be a seminormalized sequence in $X$. Assume that no subsequence of $(y_n)$ is weakly convergent. Then $(y_n)$ admits a wide-$(s)$ subsequence. 
\end{prop}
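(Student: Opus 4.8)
The plan is to prove the statement through Rosenthal's $\ell_1$--dichotomy, which reduces everything to the non-trivial weak-Cauchy case, and then to manufacture the wide-$(s)$ subsequence by a single gliding-hump construction carried out \emph{simultaneously} on vectors and on norming functionals. First I would apply Rosenthal's $\ell_1$ theorem to the bounded sequence $(y_n)$: either $(y_n)$ has a subsequence equivalent to the unit vector basis of $\ell_1$, or it has a weakly Cauchy subsequence. In the first case that subsequence is already a wide-$(s)$ sequence, since it is seminormalized and basic and the $\ell_1$ basis dominates the summing basis of $\co$ (because $\|\sum_k a_k\chi_{\{1,\dots,k\}}\|_\infty=\max_j|\sum_{k\ge j}a_k|\le\sum_k|a_k|$, and domination is preserved under equivalence). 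So I may assume $(y_n)$ is weakly Cauchy; by hypothesis no subsequence converges weakly, hence $(y_n)$ is a \emph{non-trivial} weak-Cauchy sequence.

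Let $F=w^*\text{-}\lim_n y_n\in X^{**}$. Non-triviality means $F\notin X$, so $\delta:=\mathrm{dist}(F,X)>0$. The engine of the construction is the elementary duality identity: for every finite-dimensional subspace $E\subset X$,
\[
\sup\{|F(f)| : f\in E^\perp,\ \|f\|\le 1\}=\mathrm{dist}_{X^{**}}(F,E)\ge\mathrm{dist}_{X^{**}}(F,X)=\delta,
\]
the first equality holding because $E$, being finite-dimensional, is weak-$*$ closed and hence equals $(E^\perp)^\perp$. I would then build inductively a subsequence $(x_k)=(y_{n_k})$ and functionals $f_k\in X^*$ with $\|f_k\|\le 1$ so that, writing $E_{k-1}=\vspan\{x_1,\dots,x_{k-1}\}$ and $c_k:=F(f_k)$: (a) $f_k\in E_{k-1}^\perp$, so that $f_k(x_j)=0$ for $j<k$; (b) $c_k>\delta/2$, which is possible by the identity above; and (c) at stage $k$ the index $n_k$ is chosen large enough that $|f_i(x_k)-c_i|<2^{-k}$ for every $i\le k$, which is legitimate because $f_i(y_n)\to c_i$ for each fixed $i$.

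From (a)--(c) I would read off both defining properties of a wide-$(s)$ sequence. For the summing-basis domination, fix scalars $a_1,\dots,a_N$ and $1\le k\le N$ and evaluate
\[
f_k\Big(\sum_{j=1}^N a_j x_j\Big)=\sum_{j=k}^N a_j f_k(x_j)=c_k\sum_{j=k}^N a_j+\sum_{j=k}^N a_j\theta_{kj},\qquad |\theta_{kj}|<2^{-j}.
\]
Using $c_k>\delta/2$, $\|f_k\|\le 1$, and the a priori coefficient bound $\max_j|a_j|\le C\|\sum_j a_j x_j\|$ (valid once basicness is secured, with $C=2\mathcal{K}/\inf_j\|x_j\|$), the error term is absorbed and one obtains $\max_{k\le N}\big|\sum_{j=k}^N a_j\big|\le L\|\sum_j a_j x_j\|$; that is, $(x_k)$ dominates the summing basis of $\co$. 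Basicness itself I would obtain first, from Grunblum's criterion: the lower-triangular, uniformly bounded biorthogonal-type system $(x_k),(f_k)$ exhibits $(x_k)$ as a small perturbation of the monotone summing-basis model, so the partial-sum projections are uniformly bounded. Since $(x_k)$ is seminormalized as a subsequence of $(y_n)$, it is a wide-$(s)$ sequence.

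The main obstacle—and the one point where non-triviality is genuinely used—is to secure the \emph{uniform} lower bound $c_k>\delta/2$ while at the same time forcing $f_k$ to annihilate all previously chosen vectors. This is precisely what the displayed distance identity delivers, the key being that $\mathrm{dist}_{X^{**}}(F,E)\ge\mathrm{dist}_{X^{**}}(F,X)=\delta$ uniformly over \emph{all} finite-dimensional $E\subset X$; the argument would break down if $F\in X$, i.e.\@ if some subsequence of $(y_n)$ converged weakly. Everything else (the gliding-hump bookkeeping, the Grunblum estimate, and the absorption of the error series $\sum_j 2^{-j}$) is routine once this uniform separation is in place.
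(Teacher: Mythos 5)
Your reduction via Rosenthal's $\ell_1$-theorem is correct (and matches the paper's first step), and your gliding-hump construction of the functionals $f_k$ through the distance identity is legitimate --- it is essentially the device in Rosenthal's own argument. The domination estimate is also fine \emph{once} basicness of $(x_k)$ is known. The genuine gap is the claim that basicness follows from the lower-triangular system $(x_k),(f_k)$ ``by Grunblum's criterion.'' It does not. The system only prescribes the pairings $f_i(x_j)$, and such information can only produce \emph{lower} bounds for $\big\|\sum_j a_j x_j\big\|$ in terms of the tail sums $\sum_{j\ge k}a_j$ (which is precisely the summing-basis domination); Grunblum's criterion needs an \emph{upper} bound for the partial sums $\big\|\sum_{j\le m}a_jx_j\big\|$, and the vectors $x_j$ are completely unconstrained in the directions invisible to the countably many $f_i$. (Recall that even exact minimality with uniformly bounded biorthogonal functionals does not imply basicness.)

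In fact a verbatim run of your construction can output a non-basic sequence, so no bookkeeping fixes this. Work in $X=c_0\oplus_\infty c_0$, partition $\mathbb{N}$ into consecutive blocks $I_1,I_2,\dots$ with $|I_R|=4R$, and put $y_n=(s_n,v_n)$, where $(s_n)$ is the summing basis and $v_n=e_R$ if $n$ sits in an even position of $I_R$, $v_n=0$ otherwise. Then $(y_n)$ is seminormalized and weak-Cauchy with no weakly convergent subsequence, its weak$^*$ limit is $F=(\mathbf{1},0)$, and $\delta=1$. The functionals $f_k=(e_k^*,0)$ satisfy your (a)--(c) \emph{exactly}: $f_k(y_j)=0$ for $j<k$, $c_k=F(f_k)=1>\delta/2$, and the gliding-hump errors are zero, so taking $x_k=y_k$ is a legitimate run of your scheme. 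Yet $(y_n)$ is not basic: for fixed $R$, choose $a_j=+1$ at even and $-1$ at odd positions of the first half of $I_R$, with signs reversed on the second half and $a_j=0$ elsewhere. All tail sums lie in $\{0,\pm1\}$ and the second coordinates cancel over the whole block, so
\[
\Big\|\sum_{j\in I_R}a_jy_j\Big\|\le 1,
\qquad\text{while}\qquad
\Big\|\sum_{j\le m}a_jy_j\Big\|\ge \|R\,e_R\|= R
\]
for $m$ the midpoint of $I_R$, so the partial-sum projections are unbounded. The standard repair is to do what Rosenthal actually does: since no subsequence of $(y_n)$ is weakly convergent, first extract a \emph{basic} subsequence by the Kadec--Pe{\l}czy\'nski (generalized Bessaga--Pe{\l}czy\'nski) selection principle --- a genuine theorem, not a formal consequence of your system --- and only then run your functional construction on that subsequence, whose further subsequences remain basic; your domination estimate then closes the argument. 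The paper itself bypasses all of this by quoting \cite[Proposition 2.2]{Ro} for the weak-Cauchy branch.
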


\begin{proof}
If $(y_n)$ has no weak-Cauchy subsequence, then $(y_n)$ has an $\ell_1$-subsequence $(x_n)$ by the Rosenthal $\ell_1$-theorem. It is easy to see in this case that $(x_n)$ is wide-$(s)$. If otherwise $(y_n)$ has a weak-Cauchy subsequence $(y_{n_k})$, then from our assumption and \cite[p. 707]{Ro} we get that $(y_{n_k})$ is a non-trivial weak-Cauchy sequence. By \cite[Proposition 2.2]{Ro}, $(y_{n_k})$ has an $(s)$-subsequence $(x_n)$. This shows in particular that $(x_n)$ is wide-$(s)$ and concludes the proof. 
\end{proof}


\section{A lemma on affinely equivalent basic sequences}\label{sec:4}
In this section we will establish a key lemma crucial for the proof of our first main result. It concerns the following notion introduced by Pe\l czy\'nski and Singer \cite{PS}.

\begin{defi}[Pe\l czy\'nski--Singer] A basic sequence $(x_n)$ in a Banach space $X$ is said to be affinely equivalent to a sequence $(y_n)$ if there exists a sequence of scalars $\alpha_n \neq 0$ such that $(x_n)$ and $(\alpha_n y_n)$ are equivalent. 
\end{defi}

The proof of our key lemma is based on the following result of H\'ajek and Johanis (\cite[Lemma 5-(a)]{HJ}). For completeness we will provide a more direct proof. 

\begin{prop}[H\'ajek--Johanis]\label{lem:HJ} Let $X$ be a Banach space with a Schauder basis $\{x_n, x^*_n\}_{n=1}^\infty$. Assume that $\| R_n\| = 1$ for each $n\in \mathbb{N}$ and $\{ \alpha_n\}$ is non-decreasing real sequence in $(0,1]$. Then
\[
\Big\| \sum_{n=1}^\infty \alpha_n x^*_n(x) x_n\Big\| \leq \Big\| \sum_{n=1}^\infty x^*_n(x) x_n\Big\|\quad \textrm{ for each } x\in X. 
\]
\end{prop}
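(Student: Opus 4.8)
The plan is to exhibit the diagonal multiplier $x \mapsto \sum_{n} \alpha_n x_n^*(x) x_n$ as a \emph{sub-convex combination of the tail projections} $R_n$, all of which have norm one by hypothesis; the desired inequality then drops out of a single application of the triangle inequality. Writing $c_n = x_n^*(x)$ and adopting the convention $R_0 = I$ (so that $\|R_0\| = 1$ as well), the starting observation is the telescoping identity $c_n x_n = R_{n-1}x - R_n x$, valid for every $n \ge 1$.

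First I would apply Abel's summation by parts to the partial sums $S_N = \sum_{n=1}^N \alpha_n c_n x_n = \sum_{n=1}^N \alpha_n (R_{n-1}x - R_n x)$. Setting $w_0 = \alpha_1$ and $w_m = \alpha_{m+1} - \alpha_m$ for $m \ge 1$, a routine reindexing collapses the difference of the two resulting sums into
\[
S_N = \sum_{m=0}^{N-1} w_m R_m x - \alpha_N R_N x.
\]
The monotonicity of $(\alpha_n)$ guarantees $w_m \ge 0$ for all $m$, and the telescoping sum gives $\sum_{m=0}^{N-1} w_m = \alpha_N \le 1$, so that the weights form a sub-probability vector.

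The second step is the passage to the limit $N \to \infty$. Since $(x_n)$ is a Schauder basis, the tails $R_N x$ converge to $0$ in norm, and as $(\alpha_N)$ is bounded the boundary term $\alpha_N R_N x$ vanishes; meanwhile $\sum_m w_m \|R_m x\| \le (\sum_m w_m)\|x\| < \infty$ secures absolute convergence of the remaining series. Hence $S_N \to y := \sum_{m=0}^\infty w_m R_m x$, which simultaneously proves that $\sum_n \alpha_n x_n^*(x) x_n$ converges and identifies its sum with $y$. The triangle inequality together with $\|R_m\| = 1$ now yields
\[
\Big\| \sum_{n=1}^\infty \alpha_n x_n^*(x) x_n \Big\| = \|y\| \le \sum_{m=0}^\infty w_m \|R_m x\| \le \Big( \sum_{m=0}^\infty w_m \Big) \|x\| \le \|x\|,
\]
which is the assertion.

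The only delicate point, and the one I would treat most carefully, is the handling of the boundary term $\alpha_N R_N x$ together with the convergence of the series; everything else is a mechanical rearrangement. It is worth noting that the hypothesis $\|R_n\| = 1$ is used in its full strength rather than merely $\sup_n \|R_n\| < \infty$, since any constant exceeding $1$ would propagate into the final estimate and spoil the norm-one bound. The monotonicity of $(\alpha_n)$ is precisely what renders the coefficients $w_m$ nonnegative, so that the combination is genuinely a (sub-)convex one rather than a signed superposition.
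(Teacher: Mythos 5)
Your proof is correct, and it establishes exactly the same intermediate estimate as the paper's proof, namely $\|\sum_{k=1}^N \alpha_k x_k^*(x)x_k + \alpha_N R_N x\| \le \alpha_N\|x\| \le \|x\|$, followed by the same limit passage ($R_N x \to 0$ and convergence of the series); the difference lies in how that estimate is obtained. The paper follows H\'ajek--Johanis: for fixed $N$ it defines a backward recursion $y_{N,N}=\alpha_N x$ and $y_{n,N}=R_n y_{n+1,N}+\tfrac{\alpha_n}{\alpha_{n+1}}P_n y_{n+1,N}$, notes that each $y_{n,N}$ is a convex combination of $y_{n+1,N}$ and $R_n y_{n+1,N}$ (whence $\|y_{n,N}\|\le\|y_{n+1,N}\|$, using $\|R_n\|=1$), and then identifies $y_{1,N}=\sum_{k=1}^N\alpha_k x_k^*(x)x_k+\alpha_N R_N x$ by a separate induction. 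You reach the same vector directly: Abel summation applied to the telescoping identity $x_k^*(x)x_k=R_{k-1}x-R_k x$ writes it as $\sum_{m=0}^{N-1}w_m R_m x$ with $w_m\ge 0$ and $\sum_{m=0}^{N-1}w_m=\alpha_N$, so the bound drops out of a single triangle inequality rather than being distributed across $N$ recursive steps plus an induction. The underlying mathematical content --- the diagonal multiplier is a sub-convex combination of the norm-one tail projections --- is the same in both arguments, but your presentation makes that convexity structure explicit and eliminates the auxiliary vectors $y_{n,N}$ altogether, which is arguably cleaner and more self-contained; the paper's version has the merit of tracking the original lemma of H\'ajek and Johanis that it cites. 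Your closing observations (monotonicity of $(\alpha_n)$ is exactly what makes the weights nonnegative, and the constant $1$ in $\|R_n\|=1$ is what keeps the final bound equal to $\|x\|$) accurately reflect the role these hypotheses play in the paper's argument as well.
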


\begin{proof}
Fix $x\in X$. For each $N>1$, we define a new sequence $(y_{n,N})_{n=1}^\infty$ in $X$ by putting, for $n\geq N$, $y_{n, N}=\alpha_N x$ and 
\[
y_{n, N} =R_n y_{n+1, N} + \frac{\alpha_n}{\alpha_{n+1}}P_n y_{n+1, N}\quad\textrm{for } 1\leq n< N. 
\]
As in \cite{HJ} a direct computation shows $\| y_{n,N}\| \leq \| y_{n+1, N}\|$ for $n<N$.  Moreover, an easy induction argument implies 
\[
y_{n, N} = \alpha_n P_{n-1} x + \sum_{k=n}^N \alpha_k x^*_k(x) x_k + \alpha_N R_N x,\quad 1\leq n< N.
\]
Thus $y_{1,N} =\sum_{k=1}^N \alpha_k x^*_k(x) x_k + \alpha_N R_Nx$ and hence
\[
\Bigg\|\sum_{k=1}^N \alpha_k x^*_k(x) x_k + \alpha_N R_Nx\Bigg\| \leq \alpha_N \| x\|\leq \| x\| 
\]
Now it is easy to show, using that $\{\alpha_n\}$ is non-decreasing and that $(x_i)$ is basic, that the series $\sum_k \alpha_k e^*_k(x)e_k$ converges in $X$. Notice further that $\alpha_N R_Nx\to 0$. So, the result follows by taking the limit as $N\to\infty$. 

\end{proof}

We are now ready to state and prove the main result of this section which yields a sufficient condition for a basic sequence to be affinely equivalent to itself. 

\begin{lem}[Key Lemma]\label{prop:1HJ} Let $X$ be a Banach space and $(x_n)$ a basic sequence in $X$. Assume that $\{\alpha_n\}\subset (0,1]$ is a non-decreasing sequence of real numbers. Then $(x_n)\sim_ {2\mathcal{K}/\alpha_1} (\alpha_n x_n)$ where $\mathcal{K}$ is the basic constant of $(x_n)$.
\end{lem}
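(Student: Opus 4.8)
The plan is to reduce the whole statement to the Hájek–Johanis estimate (Proposition \ref{lem:HJ}), whose only structural hypothesis is that $\|R_n\|=1$. A general basic sequence need not satisfy this, so first I would pass to the equivalent \emph{bimonotone} norm on $[x_n]$ given by
$$\vertiii{x}=\sup_{1\le p\le q}\Big\|\sum_{i=p}^{q}x^*_i(x)x_i\Big\|=\sup_{1\le p\le q}\big\|(P_q-P_{p-1})x\big\|,\qquad P_0:=0.$$
Choosing $p=1$ and letting $q\to\infty$ gives $\|x\|\le\vertiii{x}$, while $\|P_n\|\le\mathcal{K}$ and the triangle inequality give $\vertiii{x}\le 2\mathcal{K}\|x\|$; hence $\|x\|\le\vertiii{x}\le2\mathcal{K}\|x\|$. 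Since each composition $(P_q-P_{p-1})R_n$ and $(P_q-P_{p-1})P_n$ is itself a block projection of the form $P_{q'}-P_{p'-1}$, one reads off $\vertiii{R_nx}\le\vertiii{x}$ and $\vertiii{P_nx}\le\vertiii{x}$; as these are nonzero idempotents, both operator norms equal $1$ in $\vertiii{\cdot}$.

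Write $D_\beta$ for the diagonal operator $\sum_n a_nx_n\mapsto\sum_n\beta_na_nx_n$. For the upper estimate I apply Proposition \ref{lem:HJ} verbatim in $([x_n],\vertiii{\cdot})$, whose tail projections now have norm one: for the non-decreasing $\{\alpha_n\}\subset(0,1]$ it gives $\vertiii{D_\alpha x}\le\vertiii{x}$. Transferring through $\|\cdot\|\le\vertiii{\cdot}\le2\mathcal{K}\|\cdot\|$ yields, for all finitely supported $(a_n)$,
$$\Big\|\sum_n\alpha_na_nx_n\Big\|\le\vertiii{D_\alpha x}\le\vertiii{x}\le2\mathcal{K}\Big\|\sum_na_nx_n\Big\|\le\frac{2\mathcal{K}}{\alpha_1}\Big\|\sum_na_nx_n\Big\|,$$
so $(x_n)$ dominates $(\alpha_nx_n)$ with constant $2\mathcal{K}/\alpha_1$.

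For the lower estimate I must bound the inverse multiplier $D_{1/\alpha}$, and here Proposition \ref{lem:HJ} does not apply because $(1/\alpha_n)$ is \emph{non-increasing}. The companion fact—multiplication by a non-increasing sequence in $(0,1]$ is a contraction in the bimonotone norm—follows from a one-line summation by parts: with $S_i=P_ix$,
$$\sum_{i=1}^{N}s_ia_ix_i=\sum_{i=1}^{N-1}(s_i-s_{i+1})S_i+s_NS_N,$$
whence $\vertiii{\sum_i s_ia_ix_i}\le\big(\sum_{i}(s_i-s_{i+1})+s_N\big)\vertiii{x}=s_1\vertiii{x}$ for any non-increasing $\{s_i\}\subset(0,1]$, using $\vertiii{S_i}\le\vertiii{x}$. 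Taking $s_i=\alpha_1/\alpha_i$ and $D_{1/\alpha}=\alpha_1^{-1}D_{\alpha_1/\alpha}$ gives $\vertiii{\sum_na_nx_n}\le\alpha_1^{-1}\vertiii{\sum_n\alpha_na_nx_n}$, and transferring back through the norm equivalence yields $\|\sum_na_nx_n\|\le\frac{2\mathcal{K}}{\alpha_1}\|\sum_n\alpha_na_nx_n\|$. Together with the previous paragraph this is exactly $(x_n)\sim_{2\mathcal{K}/\alpha_1}(\alpha_nx_n)$.

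The only genuine subtlety—and thus the main obstacle—is that Proposition \ref{lem:HJ} is tailored to the idealized situation $\|R_n\|=1$ and to non-decreasing multipliers. Renorming removes the first restriction at the unavoidable price of the factor $2\mathcal{K}$ (which is precisely what surfaces in the final constant), and the short Abel-summation companion removes the second; the point to watch is that both halves must be carried out in the \emph{same} bimonotone norm so that the constant collapses cleanly to $2\mathcal{K}/\alpha_1$.
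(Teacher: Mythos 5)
Your proof is correct and takes essentially the same route as the paper's: the identical interval-projection (bimonotone) renorming with the equivalence constants $1$ and $2\mathcal{K}$, the identical application of the H\'ajek--Johanis proposition for the domination direction, and an Abel summation in the renormed space for the reverse inequality, yielding the same constant $2\mathcal{K}/\alpha_1$. The only cosmetic difference is in the reverse inequality: you package it as a contraction estimate for the inverse multiplier (summation by parts with the non-increasing coefficients $\alpha_1/\alpha_n$ and the forward triangle inequality), whereas the paper applies summation by parts to $\sum_n a_n\alpha_n x_n$ directly and uses the reverse triangle inequality --- the underlying telescoping computation, resting on $\vertiii{P_n}\leq 1$, is the same.
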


\begin{proof} Let $L=2\mathcal{K}/\alpha_1$. The fact that $(x_n)$ $L$-dominates $(\alpha_n x_n)$ follows directly from Lemma \ref{lem:HJ}. To see this, it suffices to take an equivalent norm $\vertiii{\cdot}$ on $[x_n]$ so that in the new norm the basis $(x_n)$ fulfills $\vertiii{R_n}\leq 1$. Indeed, denote by $P_I$ the natural projection over a finite interval $I\subset\mathbb{N}$ and define a new norm on $[x_n]$ by
\[
\vertiii{x} = \sup\Big\{ \| P_I x\|\,\colon \, I\subset\mathbb{N},\, I \textrm{ finite interval}\Big\} \quad \textrm{for } x\in [x_n].
\]
Hence $\|\cdot\|$ and $\vertiii{\cdot}$ are equivalent norms on $[x_n]$ with 
\[
\max\{ \vertiii{P_n}, \vertiii{R_n}\}\leq 1,\quad \textrm{ for all } n\in\mathbb{N}. 
\]
On the other hand, as $R^2_n= R_n$ implies $\| R_n\|\geq 1$, we get that $\vertiii{R_n}=1$ for all $n\in \mathbb{N}$.  Moreover, observe that
\[
\| x\| \leq \vertiii{x} \leq 2\mathcal{K}\| x\|\quad \textrm{ for each } x\in [x_n].
\]
Thus this combined with Lemma \ref{lem:HJ} implies that, for every $(a_i)\in c_{00}$
\begin{eqnarray*}
\Big\| \sum_{i=1}^\infty a_i \alpha_i x_i\Big\| \leq \vertiii{ \sum_{i=1}^\infty a_i \alpha_i x_i}&\leq& \vertiii{\sum_{i=1}^\infty a_i x_i}\\
&\leq& 2\mathcal{K} \Big\| \sum_{i=1}^\infty a_i x_i \Big\|\leq \frac{2\mathcal{K} }{\alpha_1}\Big\| \sum_{i=1}^\infty a_i x_i \Big\|.
\end{eqnarray*}
To prove the reverse inequality, fix $N\in \mathbb{N}$ and pick any sequence of scalars $(a_i)_{i=1}^N$. Now combining the Abel's partial summation
\[
\sum_{n=1}^N a_n\alpha_n x_n =\sum_{n=1}^{N-1} (\alpha_n - \alpha_{n+1}) \sum_{i=1}^n a_i x_i  + \alpha_N \sum_{i=1}^N a_i x_i,
\]
with the $\vertiii{\cdot}$-monotonicity of $(x_n)$ (i.e., $\vertiii{P_n}\leq 1$ for any $n$), it follows that 
\begin{eqnarray*}
\vertiii{ \sum_{n=1}^N a_n \alpha_n x_n }&\geq& \alpha_N \vertiii{ \sum_{i=1}^N a_i x_i } - \sum_{n=1}^{N-1} ( \alpha_{n+1} - \alpha_n) \vertiii{ \sum_{i=1}^n a_i x_i}\\
&\geq& \alpha_1 \vertiii{ \sum_{i=1}^N a_i x_i }
\end{eqnarray*}
which in turn yields
\[
2\mathcal{K}\left\| \sum_{n=1}^N a_n \alpha_n x_n \right\| \geq \alpha_1 \left\| \sum_{n=1}^N a_n x_n \right\|.
\]
The proof is complete. 
\end{proof}


\section{Bounded, closed convex sets in spaces with property $(u)$}
Recognizing local structures in Banach spaces are relevant in the study of the metric fixed point theory. The main result of this section supplies a local version of a well-known result of James. It is concerned with the internal structure of bounded, closed convex sets in spaces with Pe\l czy\'nski's property $(u)$. 

\begin{defi}[Pe\l czy\'nski] An infinite dimensional Banach space $X$ is said to have property $(u)$ if for every weak Cauchy sequence $(y_n)$ in $X$, there exists a sequence $(x_n)\subset X$ satisfying the properties below:
\begin{enumerate}
\item $\sum_{n=1}^\infty x_n$ is weakly unconditionally Cauchy $(WUC)$ series, i.e
\[
\sum_{n=1}^\infty | x^* ( x_n) | <\infty \quad\textrm{ for all } x^*\in X^*.
\]
\item $(y_n - \sum_{i=1}^n x_i)_n$ converges weakly to zero.
\end{enumerate}
\end{defi}

\begin{rmk} A few known facts are in order: Banach spaces with an unconditional basis have property $(u)$ (cf. \cite[Proposition 3.5.4]{AK}). Other examples of spaces satisfying the property $(u)$ can be found in \cite{GL} where, for instance, it is shown that $L$-embedded spaces enjoy this property.  The classical James' space $\mathcal{J}_2$ is an example of a space which fails property $(u)$. 
\end{rmk}

\begin{lem}\label{lem:KL2} Let $X$ be a Banach space with the property $(u)$ and $C\in\mathcal{B}(X)$. Then either $C$ is weakly compact, $C$ contains an $\ell_1$-sequence or $C$ contains a $\co$-summing basic sequence. 
\end{lem}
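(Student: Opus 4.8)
The plan is to show that the failure of weak compactness forces one of the two prescribed basic sequences inside $C$, with property $(u)$ serving to manufacture a $\co$-summing sequence out of an arbitrary non-trivial weak Cauchy sequence. First I would run the standard compactness reduction. Assuming $C$ is not weakly compact, and recalling that $C$ is weakly closed (being closed, bounded and convex, by Mazur's theorem), the Eberlein--\v{S}mulian theorem produces a sequence $(y_n)\subset C$ with no weakly convergent subsequence. Applying Rosenthal's $\ell_1$-theorem to $(y_n)$ leaves two cases: either some subsequence is equivalent to the unit vector basis of $\ell_1$, in which case $C$ already contains an $\ell_1$-sequence and we are finished; or $(y_n)$ has a weak Cauchy subsequence $(y_{n_k})$, which is necessarily \emph{non-trivial} since no subsequence of $(y_n)$ converges weakly. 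It therefore suffices to prove that a non-trivial weak Cauchy sequence lying in $C$ admits a convex block basis equivalent to the summing basis of $\co$; as convex blocks of points of $C$ remain in the convex set $C$, such a block basis is a $\co$-summing basic sequence contained in $C$.

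Next I would invoke property $(u)$ on $(y_{n_k})$ to obtain a $WUC$ series $\sum_j u_j$ with $w_k := y_{n_k} - \sum_{i=1}^k u_i \to 0$ weakly. The crucial observation is that $\sum_j u_j$ is $WUC$ but \emph{not} norm (hence not unconditionally) convergent: otherwise its partial sums $\sigma_k = \sum_{i=1}^k u_i$ would converge in norm, forcing $y_{n_k} = \sigma_k + w_k$ to converge weakly and contradicting non-triviality. Consequently $(\sigma_k)$ is not Cauchy, so for some $\epsilon > 0$ I can select consecutive index blocks $J_1 < J_2 < \cdots$ with $\|v_m\| \ge \epsilon$, where $v_m = \sum_{j\in J_m} u_j$. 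Each block sum inherits the $WUC$ property from $\sum_j u_j$, so $(v_m)$ is a seminormalized $WUC$ sequence; by the Bessaga--Pe\l czy\'nski selection principle a subsequence of $(v_m)$ is then equivalent to the unit vector basis of $\co$.

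Finally I would assemble the convex block basis inside $C$. Since $(w_k)$ is weakly null, Mazur's theorem allows me to pass to convex combinations annihilating the $(w_k)$-part in norm; choosing such convex weights along the blocks $J_m$, each resulting convex block $z_m$ of $(y_{n_k})$ is, up to a controllable norm error, a convex combination of the $\sigma_k$ whose $u$-expansion has coefficients equal to $1$ on indices preceding the block, lying in $[0,1]$ within it, and $0$ afterwards. Thus the successive differences $z_m - z_{m-1}$ reproduce the block sums $v_m$ up to within-block fractional contributions, so that $(z_m)$ approximates the summing basis of the $\co$-copy spanned by the selected $v_m$'s; a small-perturbation argument then yields that $(z_m)$ is a basic sequence equivalent to the summing basis of $\co$. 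Since each $z_m \in C$, this delivers the desired $\co$-summing basic sequence in $C$.

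The hard part will be this last assembly: I must align the Mazur convex combinations that kill $(w_k)$ with the blocks of $\sum_j u_j$ generating a $\co$-basis, while keeping both the within-block tail contributions of the $WUC$ series and the accumulated approximation errors summable, so that a gliding-hump/perturbation estimate guarantees the final sequence is genuinely equivalent to the summing basis of $\co$ (in particular seminormalized and basic). As a consistency check, the argument shows en route that a space with property $(u)$ admits no strongly summing sequence, which dovetails with the fact that James' space $\mathcal{J}_2$, where such sequences exist, fails property $(u)$.
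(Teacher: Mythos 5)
Your first two paragraphs are sound and match the paper's own reduction: Eberlein--\v{S}mulian plus Rosenthal's $\ell_1$-theorem either produce an $\ell_1$-sequence or a non-trivial weak Cauchy subsequence, and your intermediate facts are correct (the $WUC$ series furnished by property $(u)$ cannot converge in norm, and a seminormalized block sequence of a $WUC$ series has a subsequence equivalent to the unit vector basis of $\co$). The gap is the third paragraph, and it is exactly the point where the paper does no work of its own: after noting that property $(u)$ passes to $[y_n]$ (Pe\l czy\'nski \cite{Pel}), the paper simply quotes the Haydon--Odell--Rosenthal theorem \cite{HOR} (cf.\ \cite{KO}) that a non-trivial weak Cauchy sequence in a space with property $(u)$ has a convex block basis equivalent to the summing basis. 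You are attempting to reprove that theorem, and the mechanism you propose --- keeping the ``within-block tail contributions'' and the approximation errors \emph{summable} so that a small-perturbation/gliding-hump estimate finishes --- cannot work. There is an inherent tension: Mazur's theorem forces the convex weights to be spread over many indices (otherwise the $(w_k)$-part is not killed), and a spread-out convex combination of the partial sums $\sigma_k$ assigns to the $u_j$'s inside its support coefficients running from nearly $1$ down to $0$; this within-block term has norm of order $1$, not $o(1)$, and is certainly not summable. Concretely, in $X=c_0\oplus\ell_2$ (which has property $(u)$) take $y_k=(s_k,e_k)$ with $(s_k)$ the summing basis and $(e_k)$ the $\ell_2$ basis; then $u_j=(e_j^{c_0},0)$ and $w_k=(0,e_k)$, and any convex combination whose $\ell_2$-part is small must have small individual weights, whence its $c_0$-part stays at sup-distance about $\tfrac12$ from \emph{every} partial sum $s_N$. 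So the convex blocks are never small perturbations of sums of your $v_m$'s; the conclusion holds for a structural reason, not a smallness reason.

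If you want a self-contained proof rather than citing \cite{HOR}, the two estimates should be obtained differently. For the upper estimate, observe that for each fixed $j$ the coefficient of $u_j$ in the convex block $z_m$ lies in $[0,1]$ and is nondecreasing in $m$; by telescoping, the difference series $\sum_m(z_{m+1}-z_m)$ is then itself $WUC$ (your Bessaga--Pe\l czy\'nski $\co$-selection is not needed for this), and Abel summation, $\sum_{m\le M}a_m z_m=t_1z_1+\sum_{2\le m\le M}t_m(z_m-z_{m-1})$ with $t_m$ the tail sums, shows that $(z_m)$ is dominated by the summing basis. The lower estimate is the second hole in your sketch: even granting that the differences $z_m-z_{m-1}$, $m\ge 2$, form a $\co$-sequence, equivalence of $(z_m)$ to the summing basis also involves the first term, which carries the fixed ``head'' $\sum_{j\le\min}u_j$ common to all partial sums; nothing in your argument prevents this vector from lying in the closed span of the later differences and degenerating the equivalence. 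The standard repair is to note that $(z_m)$ is still a \emph{non-trivial} weak Cauchy sequence (it has the same weak$^*$ limit in $X^{**}\setminus X$ as $(y_{n_k})$) and to apply the paper's Proposition~\ref{prop:selection} (Rosenthal \cite{Ro}): it has a wide-$(s)$ subsequence, which by definition dominates the summing basis. Since a subsequence of $(z_m)$ is again a convex block basis of $(y_{n_k})$ and still has a $WUC$ difference series, it both dominates and is dominated by the summing basis, i.e.\ it is the desired $\co$-summing basic sequence in $C$. With these two replacements your outline becomes a correct proof; as written, it stops short precisely where the cited theorem does the work.
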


\begin{proof} Suppose $C$ is weakly compact. By \cite[Proposition 2-(a)]{Ro}, $C$ cannot contain wide-$(s)$ sequences. So, it does not contain neither $\ell_1$-basic sequences nor $\co$-summing basic sequences, as well. Assume that $C$ is not weakly compact. Then it contains either an $\ell_1$-sequence or not. If so, the result follows. Otherwise, $C$ must contain a $\co$-summing basic sequence. Indeed, let $(y_n)\subset C$ be a weak-Cauchy sequence without weak convergent subsequences. This is possible thanks to Eberlein-\v{S}mulian's theorem and as well as Rosenthal's $\ell_1$-theorem. If $X$ has the property $(u)$, then so does the space $[(y_n)_n]$ (see \cite{Pel} (cf. also \cite[Proposition 3.5.4]{AK}). Therefore, by a result of of Haydon, Odell and Rosenthal \cite{HOR} (cf. also \cite[p. 154]{KO}), $(y_n)$ has a convex block basis $(x_n)$ which is equivalent to the summing basis of $\co$. This concludes the proof. 
\end{proof}

\begin{rmk} It is worth to mention that if $X$ has an unconditional basis then an even more strong result can be stated: 
\begin{lem} Let $X$ be a Banach space and $C\in \mathcal{B}(X)$. Assume that $X$ has an unconditional basis. Then exclusively either $C$ is weakly compact, $C$ contains an $\ell_1$-sequence or $C$ contains a $\co$-summing basic sequence. 
\end{lem}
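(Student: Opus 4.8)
The plan is to separate the two contents hidden in the word ``exclusively'': the exhaustiveness of the three alternatives and the incompatibilities among them. Label the alternatives weak compactness (a), containing an $\ell_1$-sequence (b), and containing a $\co$-summing basic sequence (c). Since a Banach space with an unconditional basis has property $(u)$ (as recorded in the Remark preceding Lemma \ref{lem:KL2}), the trichotomy is immediate from Lemma \ref{lem:KL2}: at least one of (a), (b), (c) holds. To capture the stronger, self-contained flavour, I would argue directly. Assuming $C$ is not weakly compact, $C$ is weakly closed (being closed and convex) but not relatively weakly compact, so the Eberlein--\v{S}mulian theorem furnishes a sequence $(y_n)\subset C$ no subsequence of which converges weakly. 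Rosenthal's $\ell_1$-theorem then splits the analysis: either $(y_n)$ has an $\ell_1$-subsequence, giving (b), or it has a weak-Cauchy subsequence, which by construction is a non-trivial weak-Cauchy sequence. In the latter case Rosenthal's $\co$-theorem offers a strongly summing subsequence or a convex block basis equivalent to the summing basis of $\co$, and this is exactly where the unconditional basis does essential work: via property $(u)$ and the Haydon--Odell--Rosenthal theorem the strongly summing alternative is excluded, leaving a convex block basis equivalent to the $\co$-summing basis, namely (c).

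The exclusivity I would actually establish is that weak compactness is incompatible with each of the two structural alternatives. If $C$ is weakly compact then, by Eberlein--\v{S}mulian, every sequence in $C$ has a weakly convergent subsequence. But an $\ell_1$-sequence is seminormalized and, by Rosenthal's $\ell_1$-theorem, has no weak-Cauchy subsequence at all, hence no weakly convergent one; and a $\co$-summing basic sequence is a non-trivial weak-Cauchy sequence, so were a subsequence to converge weakly the whole sequence would converge weakly to the same limit, contradicting non-triviality. Thus a weakly compact $C$ contains neither an $\ell_1$-sequence nor a $\co$-summing basic sequence, and (a) cannot coexist with (b) or (c).

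The delicate point, and the step I expect to be the main obstacle, is the apparent mutual exclusivity of (b) and (c). This cannot hold as a blunt pairwise exclusion: in $X=\co\oplus\ell_1$, which has an unconditional basis, the set $C=B_{\co}\times B_{\ell_1}\in\mathcal{B}(X)$ simultaneously contains the image of the $\co$-summing basis in the first coordinate and the $\ell_1$-basis in the second. I would therefore read ``exclusively'' as the exclusivity of weak compactness proved above, and record the content of the lemma as the clean local James dichotomy: $C\in\mathcal{B}(X)$ is weakly compact if and only if it contains neither an $\ell_1$-sequence nor a $\co$-summing basic sequence. Under this reading the only genuinely hard implication is the production of the $\co$-summing sequence in the non-trivial weak-Cauchy case, which is precisely the place where the unconditional hypothesis (through property $(u)$, used to discard the strongly summing alternative) is indispensable; everything else reduces to Eberlein--\v{S}mulian together with the two theorems of Rosenthal.
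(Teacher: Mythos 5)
Your proposal is sound in everything it actually proves, and it departs from the paper's proof at exactly the point where the paper goes wrong. The paper handles exhaustiveness the same way you do: it reduces to the case where $C$ is not weakly compact and invokes the argument of Lemma \ref{lem:KL2}, the unconditional basis entering only through property $(u)$ (your ``direct'' route via Eberlein-\v{S}mulian, Rosenthal's $\ell_1$-theorem, and Haydon--Odell--Rosenthal is exactly what that lemma does); and it handles the incompatibility of weak compactness with the two structural alternatives much as you do (the paper cites \cite[Proposition 2-(a)]{Ro} on wide-$(s)$ sequences where you argue directly from Eberlein-\v{S}mulian and weak-Cauchyness). But the paper then asserts the pairwise exclusivity that you decline to prove: it argues that if $C$ contains an $\ell_1$-basic sequence then so does $X$, and that ``since $X$ has an unconditional basis, by James' Theorem $X$ does not contain any isomorphic copy of $\co$'', hence $C$ contains no $\co$-summing basic sequence.

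That step is a misapplication of \cite{J1}. James' theorem says that a space with an unconditional basis is reflexive if and only if it contains no copy of $\co$ and no copy of $\ell_1$ (equivalently, the basis is shrinking iff $\ell_1\not\hookrightarrow X$, and boundedly complete iff $\co\not\hookrightarrow X$); it in no way prevents a space with an unconditional basis from containing both. Your example settles the matter: $X=\co\oplus\ell_1$ has an unconditional basis, and $C=B_{\co}\times B_{\ell_1}\in\mathcal{B}(X)$ contains the $\co$-summing basic sequence $(e_1+\cdots+e_n,0)_n$ and the $\ell_1$-sequence $(0,e_n)_n$ simultaneously. So the word ``exclusively'', read as mutual disjointness of all three alternatives, is false, and the portion of the paper's proof devoted to it is erroneous; the correct content of the lemma is precisely your weaker reading --- the trichotomy is exhaustive, and weak compactness is incompatible with each of the two structural alternatives, i.e.\ $C$ is weakly compact if and only if it contains neither an $\ell_1$-sequence nor a $\co$-summing basic sequence. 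Your arguments for those parts are correct, so this is not a gap in your proposal but a flaw in the statement and proof you were asked to reproduce.
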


\begin{proof} In view of the previous result it suffices to prove the result assuming that $C$ is not weakly compact. If $C$ contains an $\ell_1$-basic sequence, so does $X$. Since $X$ has unconditional basis, by James' Theorem \cite{J1} $X$ does not contain any isomorphic copy of $\co$. Hence $C$ contains no $\co$-summing basic sequences. Suppose now that $C$ contains no $\ell_1$-basic sequences. As before, we claim that $C$ contains a $\co$-summing basic sequence. The proof of this assertion follows the same steps in the final part of the proof of Lemma \ref{lem:KL2}. 
\end{proof}

\end{rmk}


\section{The $\mathcal{G}$-FPP in arbitrary Banach spaces} 
Our first main result reads as follows. 

\begin{thm} Let $X$ be a Banach space and $C\in \mathcal{B}(X)$. Then $C$ is weakly compact if and only if $C$ has the $\mathcal{G}$-FPP for affine bi-Lipschitz maps. 
\end{thm}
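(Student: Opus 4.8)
The plan is to establish the two implications separately. The forward direction, that weak compactness implies the $\mathcal{G}$-FPP for affine bi-Lipschitz maps, is the easy one: every bi-Lipschitz affine map is in particular a continuous affine map, so this follows immediately from part (i) of Theorem \ref{thm:BPP} (Benavides--Jap\'on-Pineda--Prus), or directly from the Schauder--Tychonoff fixed point theorem applied to the weakly continuous affine self-map on a weakly compact convex set. The entire content of the theorem therefore lies in the converse: assuming $C\in\mathcal{B}(X)$ is \emph{not} weakly compact, I must produce a set $K\in\mathcal{B}(C)$ and a fixed-point free affine bi-Lipschitz map $f\colon K\to K$.

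To build the witness, I would first extract good combinatorial structure from the failure of weak compactness. By Eberlein--\v{S}mulian and Rosenthal's $\ell_1$-theorem, $C$ contains a seminormalized sequence with no weakly convergent subsequence, and Proposition \ref{prop:selection} then hands me a wide-$(s)$ sequence $(x_n)\subset C$; by Rosenthal's $\mathrm{c}_0$-theorem I may further pass to (or build from) a convex block basis so that, after relabeling, $(x_n)$ is a wide-$(s)$ sequence \emph{dominating} the summing basis of $\co$. The heart of the construction, as flagged in the introduction, is then to design $f$ on $K=\overline{\conv}\{x_n\}$ as the sum of a diagonal operator and a weighted shift with carefully chosen coefficients $\{\alpha_n\}\subset(0,1]$ non-decreasing, so that $f$ maps $K$ affinely into itself, has no fixed point (because the $\co$-summing behaviour prevents any candidate convex series from being fixed, mimicking the shift obstruction used for $f_0,f_1,f_2$ in Section \ref{sec:3}), and so that the images $(f(x_n))$ form a \emph{non-trivial convex basis} of $(x_n)$ in the sense of Definition \ref{def:3sec2}.

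The crucial analytic step, and the one I expect to be the main obstacle, is verifying that $f$ is \emph{bi-Lipschitz} rather than merely Lipschitz. For the forward Lipschitz bound I would use that $(x_n)$ dominates the summing basis together with the weighting $\{\alpha_n\}$; for the inverse bound the natural tool is exactly the Key Lemma (Lemma \ref{prop:1HJ}), which guarantees $(x_n)\sim_{2\mathcal{K}/\alpha_1}(\alpha_n x_n)$, i.e.\ that rescaling a basic sequence by a non-decreasing sequence in $(0,1]$ does not destroy equivalence. This is precisely what lets the weighted shift be inverted with controlled norm, so that $f$ becomes \emph{affinely equivalent} to the identity on $[x_n]$ in the Pe\l czy\'nski--Singer sense and hence bi-Lipschitz on the bounded set $K$. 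The delicate part will be reconciling three simultaneous demands on the coefficients: the uniform domination of the summing basis (to kill fixed points), the monotonicity hypothesis $\alpha_n\nearrow$ with $\alpha_n\le 1$ required to invoke Lemma \ref{prop:1HJ}, and the self-map condition $f(K)\subset K$ (requiring convex-combination coefficients summing to $1$).

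Concretely, I would write $f\big(\sum_n t_n x_n\big)=\sum_n t_n z_n$ where each $z_n$ is a convex combination of the $x_i$ chosen so that the induced operator factors through a diagonal $\{\alpha_n\}$-rescaling composed with a shift; then the fixed-point-freeness is checked directly by showing any fixed point would force a nonzero scalar series whose partial sums are bounded yet whose terms do not sum — contradicting the strongly-summing/$\co$-summing dichotomy — while bi-Lipschitzness is reduced, via the Key Lemma and the seminormalization $\inf_n\|x_n\|>0$, to the statement that $(z_n)$ is equivalent to $(x_n)$. Finally I would record that $K=\overline{\conv}\{x_n\}\in\mathcal{B}(C)$ and that $f$ is affine with $f(K)\subseteq K$, completing the demonstration that $C$ fails the $\mathcal{G}$-FPP and thereby closing the contrapositive.
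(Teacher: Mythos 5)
Your outline follows the paper's proof skeleton closely --- same forward direction, same extraction of a wide-$(s)$ sequence via Proposition \ref{prop:selection}, same witness set $K=\overline{\conv}(\{x_n\})$, same shape of map $f\big(\sum_n t_n x_n\big)=\sum_n t_n z_n$ with $(z_n)$ a non-trivial convex basis, and the same reduction of bi-Lipschitzness to the equivalence $(z_n)\sim(x_n)$ --- but there is a genuine gap at precisely the step you flag as delicate, and your proposed coefficient scheme cannot close it. You ask the Key Lemma to ``let the weighted shift be inverted with controlled norm''. Lemma \ref{prop:1HJ} does no such thing: it only yields equivalence of $(x_n)$ with the \emph{diagonal} rescaling $(\alpha_n x_n)$; the shift summand is not covered by it, and in a general Banach space it cannot be, since the right shift of a basic sequence need not even be bounded (Gowers' space $G$, and H.I. spaces admit no shift-equivalent basic sequences --- exactly the obstruction recalled in the introduction). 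What the paper actually does is the following: take the \emph{shift} weights $\alpha_n\searrow 0$ with $0<\alpha_n<1/2$ and $\sum_n\alpha_n<\frac{1}{4\mathcal{K}}\inf_n\|x_n\|/\sup_n\|x_n\|$, set $z_n=(1-\alpha_n)x_n+\alpha_n x_{n+1}$, apply the Key Lemma to the \emph{diagonal} weights $1-\alpha_n$ (which are then non-decreasing in $(0,1]$, as the lemma requires) to get $(w_n):=\big((1-\alpha_n)x_n\big)\sim(x_n)$, and then absorb the shift part by the Principle of Small Perturbations \cite[Theorem 1.3.9]{AK}, using that $\sum_n\|z_n-w_n\|/\|w_n\|<1/(2\mathcal{K})$ by the summability condition. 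This perturbation step --- which your proposal never invokes --- is the only thing controlling the shift, and it forces the shift weights to be summable, hence decreasing to $0$. Your requirement that they be \emph{non-decreasing} is incompatible with summability, so the ``three simultaneous demands'' you list cannot be reconciled as you state them; the paper reconciles them by putting the monotonicity on $1-\alpha_n$ and the smallness on $\alpha_n$.

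Two further, more minor points. First, a wide-$(s)$ sequence dominates the summing basis of $\co$ by definition, so your extra appeal to Rosenthal's $\mathrm{c}_0$-theorem after Proposition \ref{prop:selection} is redundant. Second, both the well-definedness of $f$ on all of $K$ and the fixed-point-freeness rest on the identification $K=\big\{\sum_n t_n x_n:\ t_n\geq 0,\ \sum_n t_n=1\big\}$, which is not automatic: the paper proves it as a separate claim, using the wide-$(s)$ lower bound to see that the coefficients of a limit of elements of $K$ are nonnegative and still sum to $1$. Once that identification is in hand, no ``strongly summing / $\co$-summing dichotomy'' is needed to kill fixed points: if $f(x)=x$ with $x=\sum_n t_n x_n\in K$, then uniqueness of basis coefficients gives $(1-\alpha_1)t_1=t_1$, so $t_1=0$, and inductively $(1-\alpha_n)t_n+\alpha_{n-1}t_{n-1}=t_n$ forces $t_n=0$ for all $n$, contradicting $\sum_n t_n=1$.
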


\begin{proof}
As we have mentioned before, if $C$ is weakly compact then it has the $\mathcal{G}$-FPP for any class of norm-continuous affine maps. Thus only the converse direction needs to be proved. Assume then that $C$ is not weakly compact. By Eberlein-\v{S}mulian's Theorem, we can find a sequence $(y_n)$ in $C$ with no weakly convergent subsequences. Let $(x_n)$ be the wide-$(s)$ subsequence of $(y_n)$ given by Proposition \ref{prop:selection}. In order to prove the failure of the $\mathcal{G}$-FPP we need to exhibit a set $K\in \mathcal{B}(C)$ and a fixed-point free bi-Lipschitz affine map $f\colon K\to K$. As regards the set $K$, we let $K=\overline{\conv}(\{ x_n\})$.  Before starting the construction of $f$, we need to set up an useful formula for $K$. We claim:\vskip .1cm

\paragraph{\bf Claim:} $K=\big\{ \sum_{n=1}^\infty t_n x_n  \,\colon\, \textrm{each } t_n\geq 0\, \textrm{ and }\, \sum_{n=1}^\infty t_n=1\big\}$. 

\begin{proof}[Proof of Claim] Let 
\[
M= \Big\{ \sum_{n=1}^\infty t_n x_n  \,\colon\, \textrm{ each } t_n\geq 0 \textrm{ and } \sum_{n=1}^\infty t_n=1\Big\}. 
\]
First note that $M$ is closed in $C$. Indeed, assume that $\{u_k\}_{k=1}^\infty \subset M$ converges to $u\in C$. For each $k\in \mathbb{N}$, write $u_k=\sum_{n=1}^\infty t^{(k)}_n x_n$ where each $t^{(k)}_n \geq 0$ and $\sum_{n=1}^\infty t^{(k)}_n=1$. As $(x_n)$ is basic and $u\in [x_n]$ we may write $u= \sum_{n=1}^\infty t_n x_n$. It follows that $t_n= \lim_{k\to \infty} t^{(k)}_n\geq 0$. 

Now since $(x_n)$ is wide-$(s)$ there is a constant $L>0$ such that 
\begin{equation}\label{eqn:wide}
L \Big| \sum_{n=1}^\infty a_n\Big| \leq \Big\| \sum_{n=1}^\infty a_n x_n \Big\|\quad\forall (a_n)\in \ell_1. 
\end{equation}
Thus the series $\sum_{n=1}^\infty t_n$ converges and hence
\[
L\Big| 1 - \sum_{n=1}^\infty t_n\Big| \leq \Big\| \sum_{n=1}^\infty \big( t^{(k)}_n - t_n \big) x_n\Big\|= \| u_k - u \|\to 0,
\]
which implies $\sum_{n=1}^\infty t_n=1$ and so $u\in M$. Now since $M$ is closed convex and contains $(x_n)$ we obtain $K\subset M$. To prove the converse inclusion, let $u=\sum_{n=1}^\infty t_n x_n\in M$. We have to prove that $u\in K$.  Let $v\in K$ be fixed and define for $k\in \mathbb{N}$, 
\[
u_k =\big(1 - \sum_{n=1}^k t_n\big)\cdot v + \sum_{n=1}^k t_n x_n. 
\]
Then an easy computation shows we can conclude that $u_k \in K$ for all $k$ and, moreover, since $\sum_{n=1}^k t_n\to 1$ as $k\to\infty$, 
\[
\| u_k - u\| \leq \Big( 1 - \sum_{n=1}^k t_n \Big) \| v\| + \sup_{n\in \mathbb{N}}\| x_n\| \sum_{n=k+1}^\infty t_n\to 0,
\]
which implies $u\in K$, as it is closed. This proves the claim.
\end{proof}

With the set $K$ in hand, we proceed to construct the map $f$. Choose a sequence of scalars $(\alpha_n)$ satisfying the conditions:
\begin{enumerate}
\item $0<\alpha_n <1/2$ for $n\in \mathbb{N}$.
\item $\alpha_n \searrow 0$.
\item $\displaystyle\sum_{n=1}^\infty \alpha_n< \frac{1}{4\mathcal{K}} \frac{\inf_n \| x_n\|}{ \sup_n \| x_n\|}$.\vskip .2cm
\end{enumerate}
It is obvious that such numbers can be found. We then define $f\colon K \to K$ as follows: if $\sum_{n=1}^\infty t_n x_n\in K$, then
\[
f\Big( \sum_{n=1}^\infty t_n x_n \Big) = (1 - \alpha_1) t_1 x_1 + \sum_{n=2}^\infty \big( (1- \alpha_n)t_n + \alpha_{n-1}t_{n-1}\big) x_n.
\]
Clearly $f$ is an affine fixed point free self map of $K$. It remains to show that $f$ is bi-Lipschitz. In order to verify this, we let 
\[
z_n= (1 - \alpha_n) x_n  + \alpha_n x_{n+1},\quad n\in \mathbb{N}.
\]
Notice that $(z_n)$ is a non-trivial convex basis of $(x_n)$. Furthermore, 
\[
f(x) = \sum_{n=1}^\infty t_n z_n\quad \forall\,  x:=\sum_{n=1}^\infty t_n x_n \in K. 
\]
So, it is enough to prove the following:
\paragraph{Claim:} $(z_n)$ is equivalent to $(x_n)$.

\begin{proof}[Proof of Claim] First we note that $\inf_n \| z_n\|>0$. To see this, it suffices to note that if $(x_n)$ is wide-$(s)$ then for some constant $L>0$ such that (\ref{eqn:wide}) holds, we have 
\[
\| z_n\| = \| (1 - \alpha_n) x_n  + \alpha_n x_{n+1}\| \geq L \big| (1 - \alpha_n)  + \alpha_n \big|= L\quad\forall \,n\in \mathbb{N}.
\]
Thus $(z_n)$ is seminormalized. We shall show now that $(z_n)$ is basic. If, for $n\in \mathbb{N}$, we define
\[
w_n= (1 - \alpha_n) x_n
\]
then $(w_n)$ is equivalent to $(x_n)$, by Lemma \ref{prop:1HJ}. So, it is basic. Furthermore, $(z_n)$ is equivalent to $(w_n)$ by the Principle of Small Perturbations (\cite[Theorem 1.3.9]{AK}). For, it can be readily verified that
\[
\sum_{n=1}^\infty \frac{\| w_n - z_n \|}{\| w_n\|}< \frac{1}{2\mathcal{K}}.
\]
Thus $(z_n)$ is basic and is equivalent to $(x_n)$. This establishes the claim and completes the proof of theorem.
\end{proof}
\end{proof}


\section{The $\mathcal{G}$-FPP in spaces with Pe\l czy\'nski's property $(u)$}
In this section we give an affirmative answer for Question \ref{qtn:1} in spaces with the property $(u)$. More precisely, we obtain the following result. 

\begin{thm}\label{thm:6.1} Let $X$ be a Banach space with the property $(u)$. Then $C\in \mathcal{B}(X)$ is weakly compact if and only if it has the $\mathcal{G}$-FPP for the class of affine uniformly bi-Lipschitz maps. 
\end{thm}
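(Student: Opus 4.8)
The plan is to mirror the structure of the first main theorem (Section 6) but to upgrade the construction so that the resulting affine map is \emph{uniformly} bi-Lipschitz rather than merely bi-Lipschitz, while exploiting the extra dichotomy available under property $(u)$ via Lemma~\ref{lem:KL2}. As before, one implication is free: if $C$ is weakly compact then norm-continuous affine maps are weakly continuous, so the Schauder--Tychonoff theorem gives a fixed point, and uniformly bi-Lipschitz affine maps are in particular norm-continuous. Hence I would concentrate on the contrapositive: assuming $C$ is not weakly compact, produce $K\in\mathcal{B}(C)$ and a fixed-point-free affine uniformly bi-Lipschitz self-map $f$ of $K$.

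First I would invoke Lemma~\ref{lem:KL2}: since $C$ is not weakly compact and $X$ has property $(u)$, the set $C$ contains either an $\ell_1$-sequence or a $\co$-summing basic sequence. In either case $C$ contains a wide-$(s)$ sequence $(x_n)$ (an $\ell_1$-sequence is trivially wide-$(s)$, and a $\co$-summing basic sequence is wide-$(s)$ by definition). I then set $K=\overline{\conv}(\{x_n\})$ and reuse verbatim the \textbf{Claim} from Section 6 that $K=\{\sum_{n=1}^\infty t_n x_n : t_n\ge 0,\ \sum t_n=1\}$; that argument only used that $(x_n)$ is wide-$(s)$, so it transfers without change. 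I would define $f$ by the same formula as in Section 6, namely $f(\sum t_n x_n)=(1-\alpha_1)t_1 x_1+\sum_{n\ge 2}((1-\alpha_n)t_n+\alpha_{n-1}t_{n-1})x_n$, equivalently $f(\sum t_n x_n)=\sum t_n z_n$ with $z_n=(1-\alpha_n)x_n+\alpha_n x_{n+1}$, where $(\alpha_n)$ is a non-decreasing sequence in $(0,1]$ with $\sum\alpha_n$ small as in conditions (1)--(3) there. The Section~6 argument already shows $f$ is affine, fixed-point-free, maps $K$ into $K$, and is bi-Lipschitz, the latter because $(z_n)\sim(x_n)$ via the Key Lemma~\ref{prop:1HJ} and the Principle of Small Perturbations.

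The genuinely new work, and what I expect to be the main obstacle, is promoting \emph{bi-Lipschitz} to \emph{uniformly bi-Lipschitz}: I must bound $\sup_{x\ne y,\,p}\|f^p(x)-f^p(y)\|/\|x-y\|$ and the analogous quantity for $f^{-1}$. Writing $f=D+S$ where $D$ is the diagonal operator $x_n\mapsto(1-\alpha_n)x_n$ and $S$ the weighted shift $x_n\mapsto\alpha_n x_{n+1}$, the iterate $f^p$ is a sum of weighted-shift-type operators, so the task reduces to showing that the operator norms $\|f^p\|_{[x_n]}$ are bounded uniformly in $p$, and likewise for $(f^{-1})^p=(f^p)^{-1}$. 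The natural route is to diagonalize: since $(z_n)\sim_M(x_n)$ for some $M$, the change-of-basis operator $T\colon x_n\mapsto z_n$ is an isomorphism of $[x_n]$ onto itself, and $f$ restricted to $[x_n]$ is conjugate, through $T$, to a genuinely triangular/shift map whose iterates can be controlled by a telescoping estimate. I would carry out the following steps in order: (i) express $f^p(\sum t_n x_n)=\sum t_n z_n^{(p)}$ where $z_n^{(p)}$ are explicit convex combinations of the $x_j$ obtained by iterating the two-term recursion, with coefficients given by products of the $(1-\alpha_k)$ and $\alpha_k$; (ii) show, using the Key Lemma together with an Abel-summation / small-perturbation estimate analogous to the one in Section~6 but now uniform in $p$, that $(z_n^{(p)})_n\sim_{M'}(x_n)_n$ with $M'$ \emph{independent of $p$}, the uniformity coming precisely from $\sum_n\alpha_n<\infty$ which bounds the total mass moved by all iterates simultaneously; (iii) conclude the same uniform two-sided estimate for the inverse, either by the same diagonal-plus-small-shift decomposition of $f^{-1}$ or by observing that $f^{-1}$ is itself of the form identity-plus-summable-perturbation. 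The delicate point throughout is that ordinary bi-Lipschitzness only needs one isomorphism estimate, whereas uniform bi-Lipschitzness needs the perturbation series $\sum_n\alpha_n$ to dominate the \emph{cumulative} effect of arbitrarily many iterations; the condition $\alpha_n\searrow 0$ with $\sum\alpha_n$ small is exactly what should make the relevant geometric-type bounds summable and $p$-uniform, and verifying this cleanly is where I expect the real effort to lie.
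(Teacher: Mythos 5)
Your proof has a genuine gap, and it sits exactly where you anticipated the ``real effort'' would lie: steps (ii) and (iii) are not merely delicate, they are false. After invoking Lemma~\ref{lem:KL2} you deliberately discard the $\ell_1$/$\co$-summing structure, keep only that $(x_n)$ is wide-$(s)$, and reuse the Section~6 map $f(\sum_n t_n x_n)=\sum_n t_n z_n$ with $z_n=(1-\alpha_n)x_n+\alpha_n x_{n+1}$. That map is \emph{never} uniformly bi-Lipschitz, for any wide-$(s)$ sequence in any space and any admissible $(\alpha_n)$. To see this, note that the matrix of the linear extension of $f$ relative to $(x_n)$ is the transition matrix of the Markov chain on $\mathbb{N}$ which from state $n$ stays put with probability $1-\alpha_n$ and moves to $n+1$ with probability $\alpha_n$; thus the coefficient vector of $f^p(x_j)$ is the time-$p$ law $P^{(p)}(j,\cdot)$ of this chain started at $j$. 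Since every $\alpha_n>0$ the chain leaves every finite set almost surely, and since $\alpha_n\searrow 0$ it hardly moves, at late times, during any fixed time window; conditioning on the exit time from state $1$ then yields the loss-of-memory estimate $\sum_m |P^{(p)}(1,m)-P^{(p)}(2,m)|\to 0$ as $p\to\infty$. Consequently
\[
\|f^p(x_1)-f^p(x_2)\|\;\le\;\Big(\sup_n\|x_n\|\Big)\sum_m\big|P^{(p)}(1,m)-P^{(p)}(2,m)\big|\;\longrightarrow\;0,
\]
while $\|x_1-x_2\|>0$, so $\mathrm{Lip}\big((f^p)^{-1}\big)\to\infty$ and no estimate can rescue step (iii). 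Your heuristic in step (ii) fails for the same underlying reason: $\sum_n\alpha_n<\infty$ bounds the mass moved by \emph{one} iterate, but the mass displaced by $p$ iterates is $\sum_n\big(1-(1-\alpha_n)^p\big)$, which tends to infinity with $p$, so $f^p$ is not a ``small perturbation'' of a diagonal operator uniformly in $p$.

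What property $(u)$ actually buys --- and what the paper uses --- is precisely the structure you threw away. Keeping the conclusion of Lemma~\ref{lem:KL2} in full, $C$ contains a sequence $(x_n)$ equivalent either to the unit vector basis of $\ell_1$ or to the summing basis of $\co$; in both cases the shifted sequences $(x_{n+p})_n$ are equivalent to $(x_n)_n$ with a constant \emph{independent of} $p$ (subsequences of the $\ell_1$ basis, and shifts of the summing basis, are uniformly equivalent to the original basis). The paper then takes $K=\overline{\conv}(\{x_n\})$ and the plain right shift $f(\sum_n t_n x_n)=\sum_n t_n x_{n+1}$, whose iterates are $f^p(\sum_n t_n x_n)=\sum_n t_n x_{n+p}$; the uniform shift-equivalence bounds $\mathrm{Lip}(f^p)$ and $\mathrm{Lip}\big((f^p)^{-1}\big)$ by a single constant, so $f$ is affine, fixed-point free and uniformly bi-Lipschitz. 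This is exactly the shift mechanism that the introduction explains is unavailable in general Banach spaces (HI spaces, Gowers' space $G$) --- which is why the Section~6 theorem only achieves bi-Lipschitz there, and why Theorem~\ref{thm:6.1} must exploit the dichotomy of Lemma~\ref{lem:KL2} with a different map rather than upgrade the Section~6 construction.
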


\begin{proof} It suffices to prove the converse implication. Assume that $C$ is not weakly compact. By Lemma \ref{lem:KL2} either $C$ contains a $\ell_1$-basic sequence or it contains a $\co$-summing basic sequence. In either case we see that $C$ contains a wide-$(s)$ sequence $(x_n)$ so that $(x_{n+p})$ is equivalent to $(x_n)$, but uniformly on $p\in \mathbb{N}$. Hence for $K=\overline{\conv}\big( \{ x_n\}\big)$, the map $f\colon K\to K$  given by 
\[
f( x) = \sum_{i=1}^\infty t_n x_{n+1}\quad \textrm{for } \,x= \sum_{i=1}^\infty t_n x_n\in K,
\]
is affine, fixed point free and uniformly Lipschitz. This concludes the proof of the theorem. 
\end{proof}

An immediate corollary of Theorem \ref{thm:6.1} is

\begin{cor} Let $X$ be a Banach space. Assume that $X$ is either $L$-embedded or has the hereditary Dunford-Pettis property. Then $C\in \mathcal{B}(X)$ is weakly compact if and only if it has the $\mathcal{G}$-FPP for the class of affine uniformly bi-Lipschitz maps. 
\end{cor}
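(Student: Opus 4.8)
The plan is to derive this corollary directly from Theorem \ref{thm:6.1}. Since that theorem already characterizes weak compactness through the $\mathcal{G}$-FPP for affine uniformly bi-Lipschitz maps in \emph{every} space with Pe\l czy\'nski's property $(u)$, and since the forward implication (weak compactness $\Rightarrow$ $\mathcal{G}$-FPP) always holds by the Schauder-Tychonoff argument recalled in the introduction, the entire task reduces to a single structural fact: \emph{both} classes of spaces under consideration enjoy property $(u)$. Thus I would split the argument into the two cases and in each case merely verify property $(u)$, after which Theorem \ref{thm:6.1} applies verbatim.

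For the $L$-embedded case there is nothing to prove beyond invoking the known fact, already recorded in the Remark following the definition of property $(u)$, that every $L$-embedded Banach space has property $(u)$ (see \cite{GL}). Hence Theorem \ref{thm:6.1} yields the conclusion immediately.

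The substantive case is that of the hereditary Dunford--Pettis property. Here the plan is to show that such a space cannot contain a strongly summing basic sequence, which by Rosenthal's theory \cite{Ro} is precisely the obstruction to property $(u)$: a space has property $(u)$ if and only if no basic sequence in it is strongly summing. To exclude strongly summing sequences I would exploit a sequential characterization of the hereditary Dunford--Pettis property (cf. \cite{KO}), namely that in such a space every normalized weakly null sequence admits a subsequence equivalent to the unit vector basis of $\co$. Given a hypothetical strongly summing sequence $(x_n)$, its difference sequence $d_n = x_n - x_{n-1}$ is seminormalized and weakly null, so it carries $\co$-behaviour along a subsequence; I would then feed this into Rosenthal's $\co$-theorem to produce a convex block basis of $(x_n)$ equivalent to the summing basis of $\co$ --- exactly the \emph{good} alternative in Rosenthal's dichotomy --- which is incompatible with $(x_n)$ being strongly summing. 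This is the same circle of ideas (Rosenthal \cite{Ro}, Haydon--Odell--Rosenthal \cite{HOR}, and \cite{KO}) already used in the proof of Lemma \ref{lem:KL2}.

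I expect the hereditary Dunford--Pettis case to be the only real difficulty, and within it the delicate point is passing from the \emph{subsequence}-level $\co$ control supplied by the above characterization to the \emph{covering}-block structure needed to contradict strong summability. A naive telescoping of a $\co$-subsequence of the differences does not work: the uncontrolled intermediate differences $d_j$ (with $j$ outside the selected set) intrude into the basis expansion of any partial sum $\sum_{i\le N} a_i x_i$, and any sign pattern that keeps these partial sums bounded simultaneously forces the scalar series $\sum_i a_i$ to converge, so no violation of strong summability is produced elementarily. The correct argument must therefore route through Rosenthal's dichotomy and the convex-block averaging machinery of \cite{HOR,KO} rather than a direct computation; once property $(u)$ is secured, Theorem \ref{thm:6.1} closes the proof.
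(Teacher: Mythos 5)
Your top-level reduction is exactly the paper's own: the corollary is recorded there as an immediate consequence of Theorem \ref{thm:6.1}, both cases being disposed of by citation --- $L$-embedded spaces have property $(u)$ by \cite{GL}, and hereditarily Dunford--Pettis spaces have property $(u)$ by the proof of \cite[Theorem 2.1]{KO} (this is stated in the remark following the corollary). Your $L$-embedded case coincides with this, and your reduction of the hereditary Dunford--Pettis case to the statement ``no strongly summing basic sequence exists'' is also legitimate: by \cite{Ro}, the $\co$-theorem together with the fact that every convex block basis of a strongly summing sequence is strongly summing (and the summing basis is not) shows that property $(u)$ is equivalent to the absence of such sequences. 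The genuine problem lies in the mechanism you propose for excluding them.

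Rosenthal's $\co$-theorem is a dichotomy for an arbitrary non-trivial weak-Cauchy sequence; it cannot be ``fed'' the $\co$-subsequence of differences as auxiliary input, and applied to your hypothetical strongly summing $(x_n)$ it can only return the first alternative: every subsequence of a strongly summing sequence is again strongly summing, while by the convex-block stability just mentioned no convex block basis of $(x_n)$ can be equivalent to the summing basis. So the dichotomy will never ``produce'' the object you want, and your stated route collapses. Ironically, the step you single out as the one that fails is the one that works. After thinning the selected indices so that the pairs $\{n_k-1,n_k\}$ are pairwise disjoint, put $a_{n_k}=1$, $a_{n_k-1}=-1$, and $a_i=0$ for every other $i$. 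Then each partial sum $\sum_{i\le N}a_ix_i$ equals either $\sum_{k\le K}d_{n_k}$ or $\sum_{k\le K}d_{n_k}-x_{n_{K+1}-1}$ for some $K$, so its norm is at most the $\co$-equivalence constant of $(d_{n_k})$ plus $\sup_n\|x_n\|$; no uncontrolled ``intermediate differences'' intrude, because the coefficients are prescribed directly in the expansion along $(x_i)$ rather than recovered by Abel summation against all the differences $d_j$. On the other hand, the scalar partial sums $\sum_{i\le N}a_i$ oscillate between $0$ and $-1$, so $\sum_i a_i$ diverges, contradicting strong summability outright and with no appeal to the dichotomy. With this repair --- or, more economically, with the bare citation of \cite[Theorem 2.1]{KO} as in the paper --- your proof of the corollary is complete.
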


\begin{rmk} Recall \cite{D} that a Banach space $X$ is said to have the Dunford-Pettis property if for every pair of weakly null sequences $(x_n)\subset X$ and $(x^*_n)\subset X^*$ one has $\lim_{n\to \infty} \langle x_n , x^*_n\rangle=0$. Further, $X$ is said to have the hereditarily Dunford-Pettis if all of its closed subspaces have the Dunford-Pettis property. It is also known (cf. proof of \cite[Theorem 2.1]{KO}) that spaces with the hereditary Dunford-Pettis property have property $(u)$. 
\end{rmk}


 \nocite{*}
\bibliographystyle{cdraifplain}
\bibliography{xampl}

\end{document}